


\documentclass[12pt]{amsart}

\numberwithin{equation}{section}

\usepackage{amsmath,amssymb,amsthm}
\usepackage[latin1]{inputenc}
\usepackage[english]{babel}
\usepackage{enumerate}

\usepackage{graphicx}

\usepackage[x11names,rgb]{xcolor}
\usepackage{tikz}
\usetikzlibrary{arrows,shapes}

\textheight=23cm
\textwidth=16cm
\hoffset=-1.7cm
\voffset=-1.8cm

\newtheorem{theorem}{Theorem}[section]
\newtheorem{corollary}[theorem]{Corollary}
\newtheorem{proposition}[theorem]{Proposition}
\newtheorem{lemma}[theorem]{Lemma}

\theoremstyle{definition}
\newtheorem{definition}[theorem]{Definition}
\newtheorem{example}[theorem]{Example}
\newtheorem{remark}[theorem]{Remark}

\numberwithin{equation}{section}

\newcommand{\Spec}{\textnormal{Spec}\,}
\newcommand{\Supp}{\mathrm{Supp}}
\newcommand{\cN}{\mathcal{N}}
\newcommand{\Ht}{\textnormal{Ht}}
\newcommand{\In}{\mathrm{in}}
\newcommand{\Aox}{A[\mathbf x,x_n]}
\newcommand{\Ax}{A[\mathbf x]}
\newcommand{\Kox}{K[\mathbf x,x_n]}
\newcommand{\Kx}{K[\mathbf x]}
\newcommand{\St}{\underline{\mathrm{St}}}

\def\bcr{\color{black}}
\def\ecr{\color{black}}
\def\bcR{\color{black}}


\title[]{The scheme of liftings and applications}

\author[]{C. Bertone}
\email{cristina.bertone@unito.it, margherita.roggero@unito.it}
\address{Dip. di Matematica dell'Universit\`a di Torino, Torino, Italy}

\author[]{F. Cioffi}
\email{cioffifr@unina.it, maguida@unina.it}
\address{Dip. di Matematica e Applicazioni dell'Universit\`a di Napoli, Napoli, Italy}

\author[]{M. Guida}

\author[]{M. Roggero}

\keywords{lifting, Gr\"obner basis, radical lifting, Cohen-Macaulay scheme}
\subjclass[2000]{13P10, 14B10, 14M05}

\begin{document}

\begin{abstract}
We study the locus of the liftings of a homogeneous ideal $H$ in a polynomial ring over any field. We prove that this locus can be endowed with a structure of scheme $\mathrm L_H$ by applying the constructive methods of Gr\"obner bases, for any given term order. Indeed, this structure does not depend on
the term order, since it can be defined as the scheme representing the functor of liftings of $H$. We also provide an explicit isomorphism between the schemes corresponding to two different term orders.
 
Our approach allows to embed $\mathrm L_H$ in a Hilbert scheme as a locally closed subscheme, and, over an infinite field, leads to find  interesting  topological properties, as for instance that   $\mathrm L_H$ is connected and  that  its locus of radical liftings is open.  Moreover, we show that every ideal defining an arithmetically Cohen-Macaulay scheme of codimension two has a radical lifting, giving in particular an answer to an open question posed by L. G. Roberts in 1989.
\end{abstract}

\maketitle


\section*{Introduction}

In this paper we consider the lifting problem as proposed in terms of ideals first in \cite{GGR} and then in \cite{Ro} and, equivalently, in terms of $K$-algebras by Grothendieck (e.g. \cite{Ro} and the references therein or \cite{BuchEis}). Many authors have investigated this interesting problem, sometimes also describing particular lifting procedures to construct algebraic varieties with specific properties (see \cite{M,H66,GGR,Rlg,Ro,MiNa,LY} and the references therein). 

We propose and use a new approach that is based on the theory of representable functors. Indeed, we define {\em the functor of liftings} of a homogeneous polynomial ideal $H$ and show that it is representable, in the perspective given by \cite{LR2} for Gr\"obner strata and according to the point of view of 
\cite{BGS91}. Our approach is constructive and we compute {\em the scheme of liftings}   $\mathrm L_H$   of $H$, i.e.~the scheme that parameterises the liftings of $H$ and represents the functor, by a reformulation of a result of \cite{CFRo} in terms of Gr\"obner bases. 

An almost immediate result of the application of our approach, together with the features of Gr\"obner strata, is that $\mathrm L_H$  can be embedded in a Hilbert scheme, with the consequence that its locus of radical liftings  is an open subset.  This fact gives a contribution to a question posed in \cite[Remark at pag. 332]{LY}. 

Even though the scheme of liftings  $\mathrm L_H$ can be neither irreducible nor reduced (see Examples \ref{ex:LuoYilmaz} and \ref{es:struttNonRid}), we prove that, over an infinite field, $\mathrm L_H$ has several interesting topological properties. For instance, $\mathrm L_H$ is always connected, since the point  corresponding to $H$ belongs to every irreducible component of $\mathrm L_H$. Moreover, $\mathrm L_H$ is isomorphic to an affine space if and only if it is smooth at this point. These properties are proved exploiting the action of the torus $K^*=K\setminus \{0\}$ on $\mathrm L_H$.  

We then consider the special case of   ideals $H$   defining   arithmetically Cohen-Macaulay (aCM, for short) schemes of codimension two  and prove that  their schemes of liftings   are  isomorphic to   affine spaces. 
\bcR The problem of studying whether some particular  families   of ideals can be  parameterised by an affine space has been also treated by other authors. For instance, Gr\"obner strata of ideals defining aCM schemes in $\mathbb P^2$ are studied in \cite{CV,Co}, and other  Gr\"obner strata of  polynomial ideals in any number of  variables  are studied in \cite{RT} (see also the references therein). Some of the tools used in those papers also appear in the present one. In particular, we quote the action of the torus $K^*$ on the families of ideals, and, in the  aCM case, the description of ideals by means of Hilbert-Burch resolutions and the use of the Pommaret basis of quasi-stable ideals. \ecr

We are also able to prove that every saturated ideal defining an  aCM   scheme of codimension two has a radical lifting. This result is particularly significant in the context of the study of radical liftings, because of the lack of information endured until now 
in the case of polynomial homogeneous ideals in three variables. Indeed, we provide an affermative answer to the question posed by L. G. Roberts in \cite{Rlg}.

The paper is organized in the following way. 
Referring to \cite{FR,RT,LR,LR2}, in Section \ref{sec:grobner} we recall definitions and main features of Gr\"obner strata. Moreover, we give an improvement 
of \cite[Theorem 4.7]{LR} (Theorem \ref{drv}) which will be useful to embed the scheme of liftings of a homogeneous ideal in a Hilbert scheme. 

In Section \ref{sec:lifting functor}, we define the functor of liftings of a homogeneous polynomial ideal and introduce the constructive tool we use to represent it, i.e.~a reformulation of \cite[Theorem 2.5]{CFRo} by means of Gr\"obner bases (Theorem \ref{FM}). In Section \ref{sec:representability}, we prove that a functor of liftings is representable, thus obtaining that the construction of the scheme of liftings that arises from Theorem \ref{FM} does not depend on the given term order, up to isomorphisms (Theorem \ref{funtrappr1} and Corollary \ref{cor3}). In Section \ref{sec:constructions}, we give an explicit construction of the these isomorphisms (Theorem \ref{isomorfismo}).

In Section \ref{sec:torus}, we describe how we embed the scheme of liftings $\mathrm L_H$  in a Hilbert scheme and deduce that its  locus of radical liftings is an open subset (Proposition \ref{cor1} and Corollary \ref{cor4}). Then, we investigate the action of the torus on the scheme of liftings obtaining the topological properties we have previously described (Proposition \ref{prop:smooth} and Corollary \ref{cor2}). 

In Section \ref{sec:acm},  
we find that, if $H$ is a saturated homogeneous polynomial ideal defining an aCM scheme of codimension two, then  $\mathrm L_H$ is isomorphic to an affine space (Theorem \ref{th:spazio affine}). 
Moreover, exploiting the Hilbert-Burch Theorem and the potentiality of Gr\"obner deformations, we conceive a constructive method to show that every aCM scheme of codimension two has a radical lifting (Theorem \ref{thm:liftACMdue}).

All the results we present are based on constructive arguments. Hence,  
the last section is devoted to give explicative examples of the constructive methods we introduce and apply in this paper.


\section{Generalities}

A term  is a power product $x^\alpha = x_0^{\alpha_0}\cdot\ldots\cdot x_n^{\alpha_n}$.  Let $\mathbb T_{\mathbf x}$ and $\mathbb T_{\mathbf x,x_n}$ be the set of terms in the variables $\mathbf x=\lbrace x_0,\dots,x_{n-1}\rbrace$ and $\mathbf x,x_n=\{x_0,\ldots,x_{n-1},x_n\}$, respectively.
We assume that the variables are ordered as $x_0>x_1>\cdots>x_n$. 
The degree of a term is $\deg(x^\alpha)=\sum \alpha_i=\vert \alpha\vert$.

\begin{definition} For a given term order $\prec$ on $\mathbb T_{\mathbf x}$, we will denote by $\prec_n$ the corresponding {\em degreverse term order} in $\mathbb T_{\mathbf x,x_n}$, namely the graded term order such that for two terms $x^{\alpha}$ and $x^{\beta}$ in $\mathbb T_{\mathbf x,x_n}$ of the same degree,     $x^{\alpha}\prec_n x^{\beta}$ if $\alpha_n > \beta_n$ or $\alpha_n = \beta_n$ and $\frac{x^{\alpha}}{x_n^{\alpha_n}} \prec \frac{x^{\beta}}{x_n^{\beta_n}}$.
\end{definition}

We will always consider commutative rings with unit such that $1\neq 0$ and every morphism will preserve the unit. 

Let $K$ be a field. From now on, we will denote the polynomial ring $K[x_0,\dots,$ $x_{n-1}]$ by $K[\mathbf x]$ and the polynomial ring $K[x_0,\dots,x_n]$ 
by $K[\mathbf x,x_n]$. For any $K$-algebra $A$, $A[\mathbf x]$ will denote the polynomial ring $A\otimes_{K} K[\mathbf x]$ and $\Aox$ will denote 
$A\otimes_{K} K[\mathbf x,x_n]$. Obviously, $A[\mathbf x]$ is a subring of $\Aox$, hence the following notations and assumption will be stated for $\Aox$ but will hold for $A[\mathbf x]$ too. We assume that every $K$-algebra is Noetherian. 

We refer to \cite{KR1, SPES2} for standard facts about Gr\"obner bases. 
In the present paper, we only consider  either Gr\"obner bases in a polynomial ring over a field or \emph{monic} Gr\"obner bases in $A[\mathbf x,x_n]$. Hence, initial ideals are always generated by terms.

For any non-zero homogeneous polynomial $f \in \Aox$, the \textit{support} of $f$ is the set $\Supp(f)$ of terms in $\mathbb T_{\mathbf x,x_n}$ that appear in $f$ with a non-zero coefficient.  
The degree of $f$ is $\deg(f)=\max\{\deg(x^\alpha)\vert x^\alpha\in \Supp(f)\}$.
The \emph{head term} of a non-null polynomial $f$ is the maximum $\Ht(f)$ of $\Supp(f)$ w.r.t.~a given term order. 

\begin{remark}\label{rem:degreverse}
As for the graded reverse lexicographic term order ({\em degrevlex}, for short), which is a particular degreverse term order, also for every degreverse term 
order, we have that if the head term of a  homogeneous  polynomial $f$ is divisible by $x_n^r$ then the polynomial $f$ is divisible by $x_n^r$.
\end{remark}

A \emph{monomial ideal} is generated by terms. We denote by $\mathfrak j$ a monomial ideal in  $\Kx$  and by $J$ the monomial ideal $\mathfrak j\cdot K[\mathbf x,x_n]$. Note that $\mathfrak j$ and $J$ have the same monomial basis, that we denote by $B_{\mathfrak j}$.
We denote by $\cN(\mathfrak j)$ the \emph{sous-escalier} of $\mathfrak j$, that is the set of terms in $\mathbb T_{\mathbf x}$ not belonging to $\mathfrak j$. 
Analogously, we have $\cN(J)\subseteq \mathbb T_{\mathbf x,x_n}$.


\section{Background: Gr\"obner strata}
\label{sec:grobner}

In this section, we recall some results about families of homogeneous polynomial ideals sharing the same initial ideal with respect to a given term order (see Definition \ref{defGS}), and we call them Gr\"obner strata, as in \cite{Led, LR}. In other papers they are called and denoted in several different ways (e.g., \cite{CV,Co, RT} and the references therein). 

Here,  we are interested in the scheme-theoretic point of view, rather than in a set-theoretic study of Gr\"obner strata. Although this point of view underlies several papers (e.g. \cite{CF,NS}), in \cite{LR} we find the first proof of the fact that a Gr\"obner stratum can be endowed with an affine scheme structure that {\em does not depend on the reduction procedure} which is applied to compute it (see \cite[Proposition 3.5]{LR}). This structure is possibly non-reduced.

\medskip
In what follows, the polynomials and ideals we consider will always be homogeneous
with respect to the variables $\mathbf x,x_n$. 
Moreover, the polynomial ideals that we consider have and are always given by monic Gr\"obner bases.  This is a key point for the use of functors we will introduce, because of the following two facts:
\begin{enumerate}[(i)]
\item there is no ambiguity when using the terminology \lq\lq initial ideal\rq\rq, because the head terms  of the polynomials in the  Gr\"obner bases we consider have coefficient 1;
\item if $\varphi \colon A \rightarrow B$ is a morphism of $K$-algebras and $I$ is an ideal in $A[\mathbf x]$ (or $\Aox$) generated by a {\em monic} 
Gr\"obner basis $G_I$, then $I\otimes_A B$ is generated by $\varphi(G_I)$ which is again a {\em monic} Gr\"obner basis, with the same head terms \cite{BGS91}. 
In other words, monic Gr\"obner bases have a good behavior with respect to the extension of scalars. Recall that the polynomials of a reduced Gr\"obner basis 
are monic by definition. 
\end{enumerate}

Now, let $J$ be any monomial ideal in $\Kox$ and $\sigma$  be   a term order on $\mathbb T_{\mathbf x,x_n}$. Given an ideal $I$ in $\Aox$, we will denote by
$\mathrm{in}_{\sigma}(I)$ the initial ideal of $I$ w.r.t.~$\sigma$.

\begin{definition} \label{defGS} \cite{LR} 
The family of the homogeneous ideals $I\subseteq \Aox$ with  $\mathrm{in}_{\sigma}(I)=J\otimes_K A$ is called a {\em Gr\"obner stratum} and denoted by $\St_J^\sigma(A)$.
\end{definition}

By construction, the ideals belonging to a Gr\"obner stratum share the same Hilbert function. Further, $\St_J^\sigma$ is a representable functor between the
category of Noetherian $K$-algebras and that of sets. We call its representing scheme {\em Gr\"obner stratum scheme} and denote it by $\mathrm{St}_J^\sigma$.
Now, we  briefly recall the construction of the representing scheme $\mathrm{St}_J^\sigma$ and some of its main features, for which anyway we refer to \cite{LR,LR2}.

In order to compute $\mathrm{St}_J^\sigma$, we consider a set of polynomials $\mathcal G$ of the following shape:
\begin{equation}\label{JbaseC} \mathcal{G}= \{F_\alpha=x^\alpha +  \sum C_{\alpha\gamma} x^\gamma :
\Ht(F_\alpha)=x^\alpha\in B_J\}\subset K[C][\mathbf x,x_n]
\end{equation}
where the summation runs over the set $\{x^\gamma \in \mathbb T_{\mathbf x,x_n}: \vert\gamma\vert=\vert\alpha\vert \text{ and }x^\gamma\sigma x^\alpha\}$ and $C$ is a compact notation for the set of new variables $C_{\alpha \gamma}$.

Denote by $\mathfrak{a}$ the ideal in $K[C]$ generated by the coefficients of the terms of $\mathbb T_{\mathbf x,x_n}$ in a complete reductions by $\mathcal G$ of the $S$-polynomials $S(F_\alpha,F_{\beta})$ with respect to $\sigma$. By \cite[Proposition 3.5]{LR}, the ideal $\mathfrak{a}$ depends only on $J$ and $\sigma$, because it can be defined in an
equivalent intrinsic way, and defines the affine scheme $\mathrm{St}_J^\sigma$.

If, in particular, $J$ is a strongly stable saturated ideal and $\sigma$ is the degrevlex term order, then we have 
\begin{equation}\label{isomorfismo grobner}
\mathrm{St}_J^\sigma\simeq \mathrm{St}_{J_{\geq m}}^\sigma,
\end{equation}
for every positive integer $m$ \cite[Proposition 4.11]{LR}.
This last result holds under weaker hypotheses on $J$ and on $\sigma$, as now we prove.

\begin{theorem}\label{drv}
Let $J\subset \Kox$ be a monomial ideal with $B_J \subset \mathbb T_{\mathbf x}$ and let $\prec_n$ be a degreverse term order.  Then
$$\mathrm{St}_J^{\prec_n}\cong \mathrm{St}_{J_{\geq m}}^{\prec_n}, \text{ for every integer } m,$$
and $\mathrm{St}_J^{\prec_n}$ can be embedded in the Hilbert scheme  $\mathcal{H}ilb^n_{p(t)}$ as a locally closed subscheme, where $p(t)$ is the Hilbert polynomial of $\Kox/J$.
\end{theorem}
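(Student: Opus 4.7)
The plan is to generalise \cite[Proposition 4.11]{LR}, where the isomorphism was proved under the stronger assumptions that $J$ is strongly stable and saturated and $\sigma$ is degrevlex. Since $B_J\subset\mathbb T_{\mathbf x}$ is equivalent to $J:x_n=J$, the first step is to prove the following key lemma: for every $K$-algebra $A$ and every $I\in\St_J^{\prec_n}(A)$, multiplication by $x_n$ is injective on $\Aox/I$, and the analogous statement holds in degrees $\geq m$ for every $\widetilde I\in \St_{J_{\geq m}}^{\prec_n}(A)$. Both parts follow by a head-term induction: if $x_n f\in I$ then $x_n\,\Ht_{\prec_n}(f)\in J$, whence $\Ht_{\prec_n}(f)\in J:x_n=J$; one then subtracts a monic Gr\"obner basis element of $I$ to drop the head term and iterates. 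The same argument runs for $\widetilde I$ in degrees $\geq m$, using that $(J_{\geq m}:x_n)_k=(J_{\geq m})_k$ whenever $k\geq m$.

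I would next exhibit the isomorphism at the level of functors. The natural truncation morphism $\phi:\St_J^{\prec_n}\to \St_{J_{\geq m}}^{\prec_n}$, $\phi_A(I):=I_{\geq m}$, is well defined because the passage to initial ideals commutes with graded truncation. For the inverse $\psi$, given $\widetilde I\in \St_{J_{\geq m}}^{\prec_n}(A)$, set
\[
\psi_A(\widetilde I)_d:=(\widetilde I:x_n^{m-d})_d=\{\,f\in \Aox_d \;:\; x_n^{m-d}f\in\widetilde I_m\,\}\quad\text{for } d<m,
\]
and $\psi_A(\widetilde I)_d:=\widetilde I_d$ for $d\geq m$. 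Stability of $\psi_A(\widetilde I)$ under multiplication by $x_i$ uses the lemma: from $x_n^{m-d}f\in\widetilde I$ one gets $x_n(x_n^{m-d-1}x_if)\in\widetilde I_{m+1}$, and injectivity of $x_n$ on $\Aox_m/\widetilde I_m$ forces $x_n^{m-d-1}x_if\in\widetilde I_m$. The equality $\In_{\prec_n}(\psi_A(\widetilde I))=J\otimes A$ uses Remark \ref{rem:degreverse}: for each $x^\alpha\in B_J$ with $|\alpha|<m$, the element $x_n^{m-|\alpha|}x^\alpha$ lies in $B_{J_{\geq m}}$, so $\widetilde I$ contains a Gr\"obner basis element $\widetilde F$ with head term $x_n^{m-|\alpha|}x^\alpha$; by the degreverse property $\widetilde F$ is divisible by $x_n^{m-|\alpha|}$, and $\widetilde F/x_n^{m-|\alpha|}\in \psi_A(\widetilde I)_{|\alpha|}$ has head term $x^\alpha$. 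The identities $\phi\circ\psi=\mathrm{id}$ and $\psi\circ\phi=\mathrm{id}$ are immediate, and Yoneda yields $\mathrm{St}_J^{\prec_n}\cong \mathrm{St}_{J_{\geq m}}^{\prec_n}$.

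For the second assertion, pick $m$ at least the Gotzmann number of $p(t)$. The general theory of Gr\"obner strata recalled in \cite{LR,LR2} then realises $\mathrm{St}_{J_{\geq m}}^{\prec_n}$ as a locally closed subscheme of $\mathcal{H}ilb^n_{p(t)}$, via the Gotzmann embedding of the Hilbert scheme into a Grassmannian of subspaces of $\Kox_m$; composing with the inverse of the iso of the previous paragraph supplies the required embedding for $\mathrm{St}_J^{\prec_n}$. The main obstacle I expect is the injectivity lemma and the subsequent verification of ideal closure for $\psi_A(\widetilde I)$: the combinatorial condition $B_J\subset\mathbb T_{\mathbf x}$ controls only $J$, and bridging it to arbitrary ideals with that initial ideal requires careful use of Remark \ref{rem:degreverse} together with the structure of monic Gr\"obner bases.
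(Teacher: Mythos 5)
Your proposal is correct and follows essentially the same route as the paper: truncation $I\mapsto I_{\geq m}$ in one direction and $x_n$-saturation in the other (your degree-by-degree colon ideal coincides with $(\widetilde I:x_n^\infty)$ by your injectivity lemma), with Remark \ref{rem:degreverse} supplying the divisibility of the relevant Gr\"obner basis elements by the appropriate power of $x_n$, and the embedding into $\mathcal{H}ilb^n_{p(t)}$ obtained from the truncated stratum. The only point the paper verifies that you leave implicit is the compatibility of the two maps with extension of scalars (naturality), handled there via the monic Gr\"obner bases $W_{I_{\geq m}}$; since the inverse of a natural isomorphism is automatically natural, spelling out base-change compatibility of the truncation map alone would complete your argument.
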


\begin{proof}
We show that the functors $\St_{J_{\geq m}}^{\prec_n}$ and $\St_{J}^{\prec_n}$ are isomorphic. 
As a consequence, their representing schemes are isomorphic too. We denote by $G_I=\{f_\alpha\}_\alpha$ the reduced Gr\"obner basis of $I\in \St_{J}^{\prec_n} (A)$. Then, the ideal $I_{\geq m}$ has the Gr\"obner basis $W_{I_{\geq m}}$, that consists of the polynomials $f_\alpha x^\gamma$ with $\vert \gamma \vert =d_\alpha:=\max\{0,m-\vert\alpha\vert \}$. Observe that the polynomials in $W_{I_{\geq m}}$ are still monic also if in general $W_{I_{\geq m}}$ is not reduced.
Thus, the following well-defined map is a natural transformation of functors
\[\begin{array}{rcl}
 \St_J^{\prec_n} &\rightarrow & \St_{J_{\geq m}}^{\prec_n}\\ 
I &\mapsto& I_{\geq m}.
\end{array}
\] 
Indeed, for every $K$-algebra morphism  $\varphi \colon A\rightarrow  B$ and ideal $I \in \St_J^{\prec_n}(A)$,  we obtain $I_{\geq m}\otimes_A B=(I\otimes_A B)_{\geq m}$ because both these two ideals are generated by $\varphi (W_{I_{\geq m}})$ in $B[\mathbf x, x_n]$.  
The above natural transformation of functors is actually an isomorphism, with inverse
\[\begin{array}{rcl}
\St_{J_{\geq m}}^{\prec_n}& \rightarrow&  \St_{J}^{\prec_n}\\
L &\mapsto& (L \colon x_n^\infty).
\end{array}\] 
Indeed, consider the reduced Gr\"obner basis of an ideal  $L\in \St_{J_{\geq m}}^{\prec_n}$: for every term $x^\alpha \in B_J$, this basis contains a polynomial
$h_\alpha$ whose head term is $x^\alpha x_n^{d_\alpha}$. Since $\prec_n$ is a degreverse term order, we have 
$h_\alpha/x_n^{d_\alpha}\in (L \colon x_n^\infty)$, hence
$(L \colon x_n^\infty)$ contains the set of generators of an ideal whose initial ideal is $J$.  Since every  element in the basis of $J$ is not divisible by $x_n$, we have $\mathrm{in}_{\prec_n}(L \colon x_n^\infty) = J$ and, hence, $(L \colon x_n^\infty)\in \St_J^{\prec_n}(A)$.

Arguing again on those polynomials in $L$ that are monic and with head terms of kind $x^\alpha x_n^{d_\alpha}$ for every $x^\alpha \in B_J$, we get that for every $K$-algebra morphism $\varphi \colon A\rightarrow B$ we obtain $((L\otimes_A B) \colon x_n^\infty)=(L\colon x_n^\infty)\otimes_A B \in \St_{J}^\prec (B)$, where the inclusion \lq\lq$\supseteq$\rq\rq\ is a standard fact (e.g. \cite[Exercise 1.18]{AM}). 
For the other inclusion, let $f$ be a monic polynomial in $L$ and consider $f\otimes_A b\in L\otimes_A B$. Let  $x_n^k$ be the maximal power of $x_n$ by which the head term of $f$ is divisible. Then, $x_n^k$ is also the maximal power of $x_n$ by which $f$ is divisible (see Remark \ref{rem:degreverse}) and the same happens for $f\otimes_A b$, because the extension
of scalars does not modify the head terms. In conclusion, letting $\bar f:=f/x_n^k$, we have that $\bar f\otimes_A b$ belongs to $((L\otimes_A B) \colon x_n^\infty)$, thus it belongs to $(L\colon x_n^\infty)\otimes_A B$.

The last assertion is a consequence of the previous one and of \cite[Theorem 6.3]{LR}.
\end{proof}


\section{The functor of liftings of a homogeneous polynomial ideal}
\label{sec:lifting functor}

In this section, first we recall what a lifting of a given homogeneous ideal $H\subseteq \Kx$ with respect to $x_n$ is, referring to \cite{GGR,Ro,LY}. Then, following the perspective of \cite{LR2}, we introduce a functorial description of these liftings.

\begin{definition}\label{def:lifting}
Let $H$ be a homogeneous ideal of $\Kx$ and $A$ be a Noetherian $K$-algebra. A homogeneous ideal $I$ of $\Aox$ is called a {\em lifting of $H$ with respect to $x_n$}
or a {\em $x_n$-lifting of $H$}
if the following conditions are satisfied:
\begin{enumerate}
\item[(a)] the indeterminate $x_n$ is not a zero-divisor in $\Aox/I$;
\item[(b)] $(I,x_n)/(x_n)\simeq H\Ax$ under the canonical isomorphism $\Aox/(x_n)\simeq \Ax$;
 
or, equivalently,
\item[(b$'$)] { $\{g(x_0,x_1,\ldots,x_{n-1},0) : g\in I\}=H\Ax$}.
\end{enumerate}
\end{definition}

By the definition, a $x_n$-lifting is a saturated ideal. 
For every homogeneous ideal $H\subseteq \Kx$ and for every $K$-algebra $A$,  consider the set 
\begin{equation*}
\underline{\mathrm{L_{H}}}(A)=\lbrace I\subseteq \Aox : I \text{ is a $x_n$-lifting of }H \rbrace.
\end{equation*}
Next result is a reformulation of \cite[Theorem 2.5]{CFRo} (see also \cite[Proposition 6.2.6]{KR2}) in terms of Gr\"obner bases.  

\begin{theorem} \label{FM}
Let $A$ be a $K$-algebra, $H$ a homogeneous ideal of $\Kx$ and $I$ a homogeneous ideal of $\Aox$. Then, the following conditions are equivalent:
\begin{enumerate}[(i)]
\item\label{FM_i} the ideal $I$ belongs to $\underline{\mathrm{L_{H}}}(A)$;
\item\label{FM_ii} the reduced Gr\"obner basis of $I$ w.r.t.~a degreverse term order $\prec_n$ on $\Aox$ is $\lbrace f_\alpha+g_\alpha\rbrace_\alpha$, 
where $\lbrace f_\alpha\rbrace_\alpha$ is the reduced Gr\"obner basis of $H$ w.r.t.~$\prec$ and $g_\alpha \in (x_n)\Aox$.
\end{enumerate}
{Furthermore, if $I\subset \Aox$ is an $x_n$-lifting of $H$, then   $\mathrm{in}_{\prec_n}(I)$ is generated by the same terms as $\mathrm{in}_{\prec}(H)$.}
\end{theorem}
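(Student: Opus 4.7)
The plan is to record one observation about degreverse head terms and then treat the two implications separately. \emph{Observation:} if $p = f + g \in \Aox$ is homogeneous with $f \in \Ax$ nonzero and $g \in (x_n)\Aox$, every term of $g$ has positive $x_n$-exponent while every term of $f$ has $x_n$-exponent zero, so since $\prec_n$ is degreverse and all terms of $p$ share the same total degree, $\Ht_{\prec_n}(p) = \Ht_{\prec}(f)$. In particular $J := \In_{\prec}(H)$ has $B_J \subset \mathbb T_{\mathbf x}$ and a term $x^\gamma$ lies in $J\cdot\Aox$ if and only if $x_n x^\gamma$ does.

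For $(\ref{FM_ii})\Rightarrow(\ref{FM_i})$: by the observation $\In_{\prec_n}(I) = J\cdot\Aox$. Applying the $K$-algebra map $\pi\colon \Aox \to \Ax$, $x_n \mapsto 0$, sends each $f_\alpha + g_\alpha$ to $f_\alpha$, so $\pi(I) = H\Ax$, because every element of $I$ is an $\Aox$-combination of the given basis while $H\Ax$ is generated by the images $f_\alpha$. This is condition (b$'$). For condition (a), if $x_n h \in I$ and $h'$ is the Gr\"obner normal form of $h$ w.r.t.~$\prec_n$, then $x_n h' \in I$ forces $x_n\Ht(h') \in \In_{\prec_n}(I) = J\cdot\Aox$, and by the observation $\Ht(h')$ itself lies in $J\cdot\Aox$, contradicting the definition of normal form unless $h' = 0$.

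For $(\ref{FM_i})\Rightarrow(\ref{FM_ii})$ together with the \textit{Furthermore} clause: using (b$'$), for each $f_\alpha$ in the reduced Gr\"obner basis of $H$ (homogeneous of some degree $d_\alpha$) we can lift to a homogeneous $\tilde f_\alpha = f_\alpha + g_\alpha \in I_{d_\alpha}$ with $g_\alpha \in (x_n)\Aox$; the observation gives $\Ht_{\prec_n}(\tilde f_\alpha) = \Ht_{\prec}(f_\alpha)$, hence $J\cdot\Aox \subseteq \In_{\prec_n}(I)$. To promote this to an equality I would compare $A$-ranks of graded pieces: condition (a) yields the exact sequence
\[
0 \to (\Aox/I)(-1) \xrightarrow{\cdot x_n} \Aox/I \to \Ax/H\Ax \to 0,
\]
so $\mathrm{rank}_A\,(\Aox/I)_d = \sum_{i \leq d} \mathrm{rank}_A\,(\Ax/H\Ax)_i$; the analogous sequence for $J\cdot\Aox$ (on which $x_n$ is trivially regular, the generators of $J$ avoiding $x_n$) produces the same sum, using that $J\Ax$ and $H\Ax$ share the same $A$-ranks in every degree. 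Thus $J\cdot\Aox \subseteq \In_{\prec_n}(I)$ is an equality of monomial ideals, which is exactly the final assertion. Now, for each $x^\alpha \in B_J$ pick the reduced basis element $h_\alpha \in I$ with $\Ht(h_\alpha) = x^\alpha$ and write $h_\alpha = p_\alpha + q_\alpha$ with $p_\alpha$ the $x_n$-free part: $\pi(h_\alpha) = p_\alpha$ is a monic element of $H\Ax$ with head term $x^\alpha$ whose tail lies in $\cN(J)$, so by uniqueness of the reduced Gr\"obner basis of $H\Ax$ we conclude $p_\alpha = f_\alpha$ and $h_\alpha = f_\alpha + q_\alpha$ with $q_\alpha \in (x_n)\Aox$.

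The step I expect to be the main obstacle is the $A$-rank comparison in the $(\ref{FM_i})\Rightarrow(\ref{FM_ii})$ direction: it is the only point at which axiom (a) of the lifting definition feeds nontrivially into the Gr\"obner-basis data to force the initial ideal to collapse onto $J\cdot\Aox$. Everything else amounts to bookkeeping with monic Gr\"obner bases and the degreverse property recorded in Remark \ref{rem:degreverse}.
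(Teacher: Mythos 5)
Your proof is correct, and both implications end at the same place as the paper's, but the crux of $(\ref{FM_i})\Rightarrow(\ref{FM_ii})$ is handled by a genuinely different mechanism. The paper first observes that condition (a) together with Remark \ref{rem:degreverse} forces the head terms of the reduced Gr\"obner basis of $I$ to be $x_n$-free, and only then invokes (b$'$) to match them with the head terms of $H$; you never use Remark \ref{rem:degreverse} in that direction at all, instead obtaining the inclusion $J\cdot\Aox\subseteq\In_{\prec_n}(I)$ by lifting the $f_\alpha$ and promoting it to an equality by the Hilbert-function count coming from the multiplication-by-$x_n$ exact sequence, after which the $x_n$-freeness of the head terms falls out as a corollary. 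Your route makes explicit exactly where axiom (a) enters (the injectivity of $\cdot x_n$), which the paper leaves compressed into one sentence, and it is arguably the cleaner way to see why the initial ideal cannot be strictly larger than $J\cdot\Aox$. The price is that the rank bookkeeping needs $(\Aox/I)_d$ and $(\Aox/J\Aox)_d$ to be free $A$-modules with the normal monomials as bases -- otherwise $\mathrm{rank}_A$ is not defined over a general Noetherian $A$ and the step ``equal ranks plus $\cN(\In_{\prec_n}(I))\subseteq\cN(J\Aox)$ implies equality of monomial ideals'' has no content. This freeness is a standard consequence of $I$ admitting a monic (reduced) Gr\"obner basis, which is a standing convention of the paper (and is already presupposed by statement (ii) and by your final paragraph), so it is not a gap, but you should state it where you first write $\mathrm{rank}_A(\Aox/I)_d$. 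Your $(\ref{FM_ii})\Rightarrow(\ref{FM_i})$ argument coincides with the paper's, with the normal-form details of the non-zero-divisor claim spelled out, and your closing identification $p_\alpha=f_\alpha$ via uniqueness of the reduced Gr\"obner basis of $H\Ax$ is exactly the right way to finish.
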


\begin{proof} 
To prove that $(ii)$ implies $(i)$ first we observe that $x_n$ is not a zero-divisor in $\Aox/I$, because the head terms of the polynomials $f_\alpha+g_\alpha$ are not divisible by $x_n$ and these polynomials form a Gr\"obner basis w.r.t.~$\prec_n$. Moreover, for every $\alpha$, $(f_\alpha+g_\alpha)(x_0,\dots,x_{n-1},0)=f_\alpha$, hence  $\{g(x_0,x_1,\ldots,x_{n-1},0) :\forall g\in I\}=H\Aox$.

To prove that $(i)$ implies $(ii)$, observe that if $I$ is a $x_n$-lifting of $H$, then $x_n$ is not a zero-divisor in $\Aox/I$. {Thus, the head terms of the polynomials of the reduced Gr\"obner basis of $I$ w.r.t.~$\prec_n$ are not divisible by $x_n$, by Remark \ref{rem:degreverse}. Then, we conclude the proof applying condition (b$'$) of Definition \ref{def:lifting} from which we deduce that $\mathrm{in}_{\prec_n}(I)$ and $\mathrm{in}_{\prec}(H)$ are generated by the same set of terms.}
\end{proof}


If $\phi:A\rightarrow B$ is a $K$-algebra morphism, we denote by $\phi$ also the natural extension of $\phi$ to $\Aox$ and recall that the image under $\phi$ of every ideal $I$ in $\Aox$ generates the extension  $I^e = IB[\mathbf x,x_n]=I \otimes_A B$ (see \cite{BGS91}). 

\begin{corollary} 
If $\phi:A\rightarrow B$ is a $K$-algebra morphism, then for every $I \in \underline{\mathrm{L}}_H(A)$ the ideal $I\otimes_A B$ belongs to $\underline{\mathrm{L}}_H(B)$.
\end{corollary}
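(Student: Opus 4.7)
The plan is to invoke Theorem \ref{FM} twice, once to read off the structure of $I$ and once to recognize the same structure in $I\otimes_A B$. Fix a degreverse term order $\prec_n$ on the terms in $\mathbf x,x_n$ together with its restriction $\prec$ on $\mathbf x$. Since $I\in\underline{\mathrm{L}}_H(A)$, Theorem \ref{FM} gives a monic reduced Gr\"obner basis of $I$ of the form $\{f_\alpha+g_\alpha\}_\alpha$, where $\{f_\alpha\}_\alpha\subset K[\mathbf x]$ is the reduced Gr\"obner basis of $H$ with respect to $\prec$ and $g_\alpha\in(x_n)\Aox$.

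Next I would apply property (ii) recalled in Section \ref{sec:grobner}: the image of a monic Gr\"obner basis under extension of scalars is again a monic Gr\"obner basis with the same head terms. Thus, writing $\phi$ also for the induced map $\Aox\to B[\mathbf x,x_n]$, the set $\phi(\{f_\alpha+g_\alpha\}_\alpha)=\{f_\alpha+\phi(g_\alpha)\}_\alpha$ is a monic Gr\"obner basis of $I\otimes_A B$ with respect to $\prec_n$; here I use that the $f_\alpha$ lie in $K[\mathbf x]$, so $\phi$ fixes them. After interreduction, which does not change the head terms, the resulting reduced Gr\"obner basis still has the shape $\{f_\alpha+g'_\alpha\}_\alpha$ with $g'_\alpha\in(x_n)B[\mathbf x,x_n]$, because $\phi(g_\alpha)\in(x_n)B[\mathbf x,x_n]$ and the interreduction step only subtracts $B$-multiples of terms below $f_\alpha$ that are not divisible by $x_n$ but whose polynomial tails come from the $g'$'s (so they again lie in $(x_n)B[\mathbf x,x_n]$).

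Finally, I apply Theorem \ref{FM} in the opposite direction: having exhibited a reduced Gr\"obner basis of $I\otimes_AB$ in the required form, we conclude that $I\otimes_A B$ is an $x_n$-lifting of $H$ over $B$, i.e.\ $I\otimes_A B\in\underline{\mathrm L}_H(B)$. The only mildly delicate point is verifying that the interreduction of $\phi(\{f_\alpha+g_\alpha\}_\alpha)$ preserves both the $K[\mathbf x]$-part (which must remain $f_\alpha$) and the $(x_n)$-part of the tails; this is immediate once one notes that the terms in $\mathcal N(\mathrm{in}_{\prec_n}(I\otimes_A B))$ that are not divisible by $x_n$ are exactly the terms in $\mathcal N(\mathrm{in}_{\prec}(H))$ and already appear with the correct coefficient in $f_\alpha$.
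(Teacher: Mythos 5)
Your argument is correct and follows essentially the same route as the paper: read off the shape of the reduced Gr\"obner basis of $I$ from Theorem \ref{FM}, push it forward along $\phi$ using the good behaviour of monic Gr\"obner bases under extension of scalars, and apply Theorem \ref{FM} again. The only superfluous step is the interreduction discussion: since the tails of the $f_\alpha+g_\alpha$ already have support in $\cN(J)$ and $\phi$ can only shrink supports, $\phi(\{f_\alpha+g_\alpha\}_\alpha)$ is already the reduced Gr\"obner basis of $I\otimes_A B$, which is exactly the observation the paper makes.
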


\begin{proof}
Let $G$ be the reduced Gr\"obner basis of $H$ with respect to a term order $\prec$ and $G_I$ be the  reduced Gr\"obner basis of $I\in \underline{\mathrm{L}}_H(A)$ w.r.t.~$\prec_n$. Thus, by Theorem \ref{FM} we have
\[
G=\lbrace f_\alpha\rbrace_\alpha, \quad G_I=\lbrace f_\alpha+g_\alpha\rbrace_\alpha=\lbrace f_\alpha+\sum  c_{\alpha\gamma}x_nx^\gamma\rbrace_\alpha, \quad c_{\alpha\gamma} \in A,
\]
where the summation runs over the set $\{x_nx^\gamma\in \cN(J): \deg(x_nx^\gamma)=\deg(f_\alpha)\}$.

The ideal $I\otimes_A B$ is then generated by 
$\phi(G_I)=\lbrace  f_\alpha+\sum \phi(c_{\alpha\gamma})
x_nx^\gamma\rbrace_\alpha$, which is still a reduced Gr\"obner basis with respect to $\prec_n$ because the polynomials of $G_I$ are monic. Hence, $I\otimes_A B$ is a $x_n$-lifting of $H$ in $B[\mathbf x,x_n]$ by Theorem \ref{FM}.
\end{proof}

The  previous result allows us to  define  a functor. 

\begin{definition}  The  {\em functor of liftings}  of $H$ 
\[
\underline{\mathrm{L_{H}}}: \underline{\text{Noeth-}K\text{-Alg}}\rightarrow \underline{\mathrm{Set}}
\]
associates to every Noetherian $K$-algebra $A$ the set $\underline{\mathrm{L_{H}}}(A)$
and to every morphism of $K$-algebras $\phi:A \rightarrow B$ the map
\[\begin{array}{rcl}
\underline{\mathrm{L_{H}}}(\phi): \underline{\mathrm{L_{H}}}(A)&\rightarrow& \underline{\mathrm{L_{H}}}(B)\\
 I&\mapsto& I\otimes_A B.
\end{array}\]
\end{definition}

\begin{remark} \label{piugenerale}
In \cite{Ro}, the definition of $x_n$-lifting is actually given by a more general version of condition (b$'$). Indeed, the ideal $H$ can be replaced by 
its image via an automorphism  $\theta$  
of $\Kx$ as a graded $K$-algebra. Anyway, our study of $x_n$-liftings by the functor $\underline{\mathrm{L_{H}}}$ includes this more general situation, as we will see in Section \ref{sec:torus}. 
%
\end{remark}


\section{Representability of the functor  $\underline{\mathrm{L_{H}}}$}
\label{sec:representability}

Given the homogeneous ideal $H$ in $\Kx$ and its reduced Gr\"obner basis $G=\lbrace f_\alpha\rbrace_\alpha$ w.r.t.~$\prec$, for every $x^\alpha \in \In_\prec(H)$ we define
\begin{equation}\label{code}
g_\alpha:=\sum_{x_nx^\gamma\in \cN(J)_{\vert\alpha\vert}}C_{\alpha\gamma}x_nx^\gamma, \quad \mathcal G=\lbrace f_\alpha+g_\alpha\rbrace_\alpha,
\end{equation}
where the $C_{\alpha\gamma}$'s are new variables. We set  $C=\lbrace C_{\alpha\gamma}\rbrace_{\alpha,\gamma}$ and give a term order    on the terms of $K[C]$ by which we extend the term order $\prec_n$ to an elimination term order of the variables $\mathbf x,x_n$ in $K[C][\mathbf x,x_n]$. For simplicity, we keep on using the notation $\prec_n$ for this elimination term order on $K[C][\mathbf x,x_n]$. As usual, we denote by $S(f_\alpha+g_\alpha,f_{\alpha'}+g_{\alpha'})$ the $S$-polynomial between $f_\alpha+g_\alpha$ and $f_{\alpha'}+g_{\alpha'}$.


\begin{proposition}\label{comeLR}
Let $H$, $G$, $\mathcal G$, $\prec$ and $\prec_n$ be as above and consider
an ideal $\mathfrak h$ in $K[C]$ with Gr\"obner basis $\mathcal H$. The followings are equivalent:
\begin{enumerate}[(i)]
\item \label{comeLR_i}$\mathcal G\cup \mathcal H$ is a Gr\"obner basis in $K[C][\mathbf x,x_n]$;
\item \label{comeLR_ii}$\mathfrak h$ contains the coefficients of the terms of $\mathbb T_{\mathbf x,x_n}$ in all the polynomials in the ideal $(\mathcal G)K[C][\mathbf x,x_n]$ that are reduced modulo
$\In_\prec(G)$;
\item \label{comeLR_iii} $\mathfrak h$ contains the coefficients of the terms of $\mathbb T_{\mathbf x,x_n}$ in every complete reduction by $\mathcal G$ of $S(f_\alpha+g_\alpha,f_{\alpha'}+g_{\alpha'})$ with respect  to $\prec_n$, for every $\alpha$ and $\alpha'$;
{\item \label{comeLR_iv}$\mathfrak h$ contains all the coefficients of the terms of $\mathbb T_{\mathbf x,x_n}$ in a complete reduction by $\mathcal G$ of $S(f_\alpha+g_\alpha,f_{\alpha'}+g_{\alpha'})$ with respect  to $\prec_n$, for every $\alpha$ and $\alpha'$.
}
\end{enumerate}
\end{proposition}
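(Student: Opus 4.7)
The plan is to prove the chain of implications (iii) $\Rightarrow$ (iv) $\Rightarrow$ (i) $\Rightarrow$ (ii) $\Rightarrow$ (iii). The implication (iii) $\Rightarrow$ (iv) is immediate because (iv) asks the coefficient condition only for \emph{some} complete reduction, while (iii) asks it for \emph{every} such reduction. The other implications rely on two structural features of the setup: the elimination term order $\prec_n$ on $K[C][\mathbf x,x_n]$ makes every element of $\mathcal{H}\subset K[C]$ have head term a pure monomial in the $C$-variables, while each $f_\alpha + g_\alpha \in \mathcal{G}$ has head term $x^\alpha \in B_J$; moreover, since elements of $\mathcal{H}$ only involve the $C$-variables, reductions by $\mathcal{H}$ act only on coefficients in $K[C]$ and leave the $\mathbb T_{\mathbf x,x_n}$-support of a polynomial unchanged.

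For (iv) $\Rightarrow$ (i) I will apply Buchberger's criterion to $\mathcal{G}\cup \mathcal{H}$. The $S$-pairs split into three types. $S$-polynomials between two elements of $\mathcal{H}$ reduce to zero modulo $\mathcal{H}$ because $\mathcal{H}$ is a Gr\"obner basis of $\mathfrak{h}$. $S$-polynomials between a $\mathcal{G}$-element and an $\mathcal{H}$-element have head terms with disjoint variables, so the standard coprime head-terms argument yields reduction to zero. For an $S$-polynomial $S(f_\alpha+g_\alpha, f_{\alpha'}+g_{\alpha'})$, hypothesis (iv) supplies a complete reduction $R$ by $\mathcal{G}$ whose coefficients in $K[C]$ all lie in $\mathfrak{h}$; writing $R = \sum_\mu q_\mu(C)\,\mu$ with $\mu \in \cN(J)$ and $q_\mu \in \mathfrak{h}$, I further reduce each $q_\mu$ to zero using $\mathcal{H}$, thereby reducing $R$ to zero modulo $\mathcal{G}\cup \mathcal{H}$.

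For (i) $\Rightarrow$ (ii), take $F \in (\mathcal{G})K[C][\mathbf x, x_n]$ with $\Supp(F) \subseteq \cN(J)$. Since $\mathcal{G}\cup\mathcal{H}$ is a Gr\"obner basis of an ideal containing $F$, complete reduction of $F$ by $\mathcal{G}\cup \mathcal{H}$ yields zero; but no head term $x^\alpha$ of $\mathcal{G}$ divides any $\mu \in \Supp(F)$, and $\mathcal{H}$-reductions introduce no new monomials in $\mathbf x, x_n$, so the reduction proceeds entirely by $\mathcal{H}$ and acts coefficient-wise on each $q_\mu(C)$. This forces $q_\mu \in \mathfrak{h}$ for every $\mu$, which is exactly (ii). Finally, (ii) $\Rightarrow$ (iii) is immediate: any complete reduction of $S(f_\alpha+g_\alpha, f_{\alpha'}+g_{\alpha'})$ by $\mathcal{G}$ is a polynomial in $(\mathcal{G})K[C][\mathbf x, x_n]$ with support in $\cN(J)$, so (ii) applies to it directly.

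The main technical subtlety is the non-uniqueness of the complete reduction by $\mathcal{G}$: since $\mathcal{G}$ is not a priori a Gr\"obner basis over $K[C]$, different reduction strategies may yield different remainders, and a direct proof of (iv) $\Leftrightarrow$ (iii) is awkward. The cycle above sidesteps this cleanly: it suffices to verify the coefficient condition for one reduction (hypothesis (iv)), and this then upgrades $\mathcal{G}\cup \mathcal{H}$ to a genuine Gr\"obner basis via (i), forcing the condition on every reduction through (ii) $\Rightarrow$ (iii).
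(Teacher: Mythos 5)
Your argument is correct and complete. The paper itself gives no written proof here --- it only says ``repeat the arguments of [LR, Proposition~3.5]'' --- so what you have done is supply in full the standard Gr\"obner-stratum argument that the citation points to: Buchberger's criterion applied to $\mathcal G\cup\mathcal H$, using that the elimination order makes the head terms of $\mathcal G$ (pure $\mathbf x,x_n$-monomials) and of $\mathcal H$ (pure $C$-monomials) coprime, so that only the $\mathcal G$--$\mathcal G$ pairs carry content, and that $\mathcal H$-reductions act coefficientwise on the $q_\mu(C)$ without disturbing the $\mathbb T_{\mathbf x,x_n}$-support. The one genuinely useful choice you make is the cyclic ordering $(iii)\Rightarrow(iv)\Rightarrow(i)\Rightarrow(ii)\Rightarrow(iii)$: condition (iv) (``\emph{a} reduction'' rather than ``\emph{every} reduction'') is the item this paper adds beyond [LR], and routing it through (i) and (ii) is exactly the clean way to neutralize the non-uniqueness of remainders modulo a set that is not yet known to be a Gr\"obner basis --- the same point the authors address only informally in the discussion following Definition~\ref{defh0}. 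No gaps.
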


\begin{proof}
It is sufficient to repeat the arguments of \cite[Proposition 3.5]{LR}.
\end{proof}

{
\begin{definition}\label{defh0}
We denote by $\mathfrak h_0$ the ideal in $K[C]$ generated by the coefficients of the terms of $\mathbb T_{\mathbf x,x_n}$ in a complete reduction with respect to $\mathcal G$ of $S(f_\alpha+g_\alpha,f_{\alpha'}+g_{\alpha'})$, for every $\alpha$ and $\alpha'$.
\end{definition}

At first glance, one might think that the ideal $\mathfrak h_0$ of Definition \ref{defh0} is not unique. In fact, in general, $\mathcal G$  is not a Gr\"obner basis, so that there could be several different complete reductions of each  polynomial $S(f_\alpha+g_\alpha,f_{\alpha'}+g_{\alpha'})$, hence several different sets of polynomials in $K[C]$  generating  ideals fulfilling Definition \ref{defh0}. Nevertheless, exploiting  the equivalence between the conditions of Proposition \ref{comeLR},   we  see that all these different  sets    generate  the same ideal $\mathfrak h_0$. Indeed, by definition,  $\mathfrak h_0$  satisfies the condition  (iv) of Proposition \ref{comeLR}, so that it also satisfies condition (iii). Moreover, by definition, $\mathfrak h_0$ is contained in every ideal $\mathfrak h$  that satisfies condition (iii).  Thus, we could define $\mathfrak h_0$  in an  intrinsic way, for instance as the intersection of the ideals that satisfy the conditions of Proposition \ref{comeLR} or as the minimum with respect to the inclusion among them. 
}

\begin{theorem}\label{funtrappr1}
Let $H\subseteq \Kx$ be a homogeneous ideal and $\prec$ be a term order. The affine scheme $\mathrm{Spec}(K[C]/\mathfrak h_0)$ represents the functor $\underline{\mathrm L}_H$.
\end{theorem}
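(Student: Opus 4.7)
The plan is to exhibit, naturally in the Noetherian $K$-algebra $A$, a bijection
\[
\Phi_A\colon \mathrm{Hom}_{K\text{-Alg}}\bigl(K[C]/\mathfrak h_0,\, A\bigr)\longrightarrow \underline{\mathrm L}_H(A),
\]
using the family $\mathcal G=\{f_\alpha+g_\alpha\}_\alpha$ of (\ref{code}) as the universal lifting. The forward map sends $\phi\colon K[C]/\mathfrak h_0\to A$ to the ideal $I_\phi\subseteq \Aox$ generated by $\phi(\mathcal G)$, where $\phi$ is extended coefficient-wise. To see $I_\phi\in \underline{\mathrm L}_H(A)$, I would first argue that $\phi(\mathcal G)$ is a monic Gr\"obner basis of $I_\phi$ with respect to $\prec_n$: by Definition \ref{defh0} and the equivalence (\ref{comeLR_i})$\Leftrightarrow$(\ref{comeLR_iv}) of Proposition \ref{comeLR}, the family $\mathcal G$ together with a Gr\"obner basis of $\mathfrak h_0$ is a Gr\"obner basis in $K[C][\mathbf x,x_n]$, and the good behavior of monic Gr\"obner bases under extension of scalars (property (ii) of Section \ref{sec:grobner}) transports this along $\phi$. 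Since the head terms of $\phi(\mathcal G)$ are the $x^\alpha\in B_J$ and $\phi(g_\alpha)\in (x_n)\Aox$, Theorem \ref{FM} identifies $I_\phi$ as an $x_n$-lifting of $H$.

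For the inverse map, given $I\in \underline{\mathrm L}_H(A)$, Theorem \ref{FM} provides a reduced Gr\"obner basis of $I$ of the shape $\{f_\alpha+\tilde g_\alpha\}_\alpha$ with $\tilde g_\alpha\in (x_n)\Aox$. Reducedness forces $\Supp(\tilde g_\alpha)\subseteq \cN(J)$, and homogeneity together with divisibility by $x_n$ force an expansion $\tilde g_\alpha=\sum_{x_nx^\gamma\in\cN(J)_{|\alpha|}} c_{\alpha\gamma}\,x_nx^\gamma$ with uniquely determined $c_{\alpha\gamma}\in A$. Setting $C_{\alpha\gamma}\mapsto c_{\alpha\gamma}$ yields a unique $K$-algebra homomorphism $\psi_I\colon K[C]\to A$, and the main task is to check that $\psi_I$ annihilates $\mathfrak h_0$. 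Here is where I expect the main obstacle: one must verify that any complete reduction modulo $\mathcal G$ of $S(f_\alpha+g_\alpha,f_{\alpha'}+g_{\alpha'})$ in $K[C][\mathbf x,x_n]$ specializes under $\psi_I$ to a complete reduction of $S(f_\alpha+\tilde g_\alpha,f_{\alpha'}+\tilde g_{\alpha'})$ modulo $\{f_\alpha+\tilde g_\alpha\}_\alpha$ in $\Aox$; since the latter family is a Gr\"obner basis of $I$, this $A$-reduction equals $0$, so each coefficient appearing in the $K[C]$-reduction is sent to $0$ by $\psi_I$. By Definition \ref{defh0}, this gives $\psi_I(\mathfrak h_0)=0$ and hence the required factorization through $K[C]/\mathfrak h_0$.

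To conclude, I would check that the two constructions are mutually inverse: starting from $\phi$, reading off the coefficients of $\phi(\mathcal G)$ trivially recovers $\phi$; starting from $I$, specializing $\mathcal G$ via $\psi_I$ reproduces the reduced Gr\"obner basis of $I$, so generates $I$. Naturality in $A$ is then immediate from property (ii) of Section \ref{sec:grobner}: for any morphism $\varphi\colon A\to B$, the reduced Gr\"obner basis of $I\otimes_A B$ is obtained by applying $\varphi$ coefficient-wise to that of $I$, so the family $(c_{\alpha\gamma})$ is transported to $(\varphi(c_{\alpha\gamma}))$ as required. This yields the natural isomorphism $\underline{\mathrm L}_H\simeq \mathrm{Hom}_{K\text{-Alg}}(K[C]/\mathfrak h_0,-)$, proving representability by $\Spec(K[C]/\mathfrak h_0)$.
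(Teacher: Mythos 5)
Your proposal is correct and takes essentially the same route as the paper's proof: both characterize the elements of $\underline{\mathrm L}_H(A)$ via Theorem \ref{FM} as the ideals with reduced Gr\"obner basis of the shape \eqref{code}, and both use Proposition \ref{comeLR} together with the definition of $\mathfrak h_0$ to identify the specializations of $\mathcal G$ that yield such Gr\"obner bases with the $K$-algebra morphisms $K[C]/\mathfrak h_0\to A$. The paper compresses this into two sentences, whereas you have spelled out the details it leaves implicit (that complete reductions in $K[C][\mathbf x,x_n]$ specialize to complete reductions over $A$, and the verification of mutual inverseness and naturality).
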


\begin{proof}
By Theorem \ref{FM}, an ideal $I$ in $\Aox$ belongs to $\underline{\mathrm L}_H(A)$ if and only if it has a reduced Gr\"obner basis with respect to $\prec_n$ of the shape of \eqref{code}. By Proposition \ref{comeLR} and by construction, we get that $\mathcal G$ becomes a reduced Gr\"obner basis if and only if the parameters $C_{\alpha\gamma}$ are replaced by constants $c_{\alpha\gamma}\in A$ that satisfy the conditions in $\mathfrak h_0$.
For a given $I\in \underline{\mathrm L}_H(A)$, there is a unique choice of the coefficients $c_{\alpha\gamma}$ giving rise to the unique reduced Gr\"obner basis of $I$, and this choice corresponds to a $K$-algebra morphism $K[C]/\mathfrak h_0\rightarrow A$, i.e.~to a scheme morphism $\mathrm{Spec}(A)\rightarrow \mathrm{Spec} (K[C]/\mathfrak h_0)$.
\end{proof}

\begin{definition}
For every homogeneous ideal $H\subset \Kx$, the representing scheme of the functor $\underline{\mathrm L}_H$ is called the \emph{scheme of liftings of $H$} and is denoted by $\mathrm L_H$.
\end{definition}

{
As an immediate consequence of our functorial approach, we also obtain that $\mathrm L_H$ is independent of the term order. We single out this result because, as we highlighted at the beginning of Section 2, we are interested in the scheme structure of $\mathrm{Spec} (K[C]/\mathfrak h_0)$, not only in the set of points at which the polynomials of $\mathfrak h_0$ vanish  (see Example \ref{es:struttNonRid}). 
}

\begin{corollary}\label{cor3}
For every homogeneous ideal $H\subset \Kx$, the scheme of liftings $\mathrm L_H$ is independent of the term order on $\Kx$ that is used to construct it. 
\end{corollary}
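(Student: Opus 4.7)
The plan is to observe that this is a purely formal consequence of Theorem \ref{funtrappr1} together with the Yoneda lemma. Indeed, the crucial point to emphasize is that the functor $\underline{\mathrm{L}}_H$ itself, as defined in Section \ref{sec:lifting functor}, makes no reference to any term order: its values $\underline{\mathrm{L}}_H(A)$ are described intrinsically by conditions (a) and (b) of Definition \ref{def:lifting}, and its action on morphisms is the extension of scalars $I\mapsto I\otimes_A B$. Only the construction of a scheme that represents this functor passes through a choice of term order.

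First, I would fix two term orders $\prec$ and $\prec'$ on $K[\mathbf x]$ and denote by $\mathrm L_H^{\prec}$ and $\mathrm L_H^{\prec'}$ the two affine schemes produced by Theorem \ref{funtrappr1} from the reduced Gr\"obner bases of $H$ with respect to $\prec$ and $\prec'$, respectively, together with the corresponding degreverse extensions $\prec_n$ and $\prec'_n$. By Theorem \ref{funtrappr1}, each of them represents the \emph{same} functor $\underline{\mathrm L}_H$, that is, there are natural isomorphisms
\[
\mathrm{Hom}_{K\text{-Alg}}\bigl(K[C]/\mathfrak h_0,\,-\bigr)\;\simeq\;\underline{\mathrm L}_H\;\simeq\;\mathrm{Hom}_{K\text{-Alg}}\bigl(K[C']/\mathfrak h_0',\,-\bigr).
\]

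Next, I would invoke the standard uniqueness of representing objects: if two schemes both represent the same set-valued functor, they are canonically isomorphic via the isomorphism induced by the universal elements. Concretely, the identity morphism on $\mathrm L_H^{\prec}$ corresponds under the Yoneda bijection to an element of $\underline{\mathrm L}_H(\mathrm L_H^{\prec})$, namely the universal lifting; transporting this element through the representability of $\mathrm L_H^{\prec'}$ yields a morphism $\mathrm L_H^{\prec}\to \mathrm L_H^{\prec'}$, and the analogous construction in the other direction produces a two-sided inverse. This gives a unique isomorphism $\mathrm L_H^{\prec}\simeq \mathrm L_H^{\prec'}$ compatible with both structures of representing scheme.

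There is essentially no technical obstacle here, since the content is entirely categorical. The only point one might dwell on is to make explicit that the two ideals $\mathfrak h_0\subset K[C]$ and $\mathfrak h_0'\subset K[C']$, which are built by completely different Gr\"obner-theoretic procedures from $\prec_n$ and $\prec'_n$ and a priori look unrelated, must nevertheless define isomorphic affine schemes. An explicit combinatorial description of these isomorphisms, which is the more substantial task, is deferred to Section \ref{sec:constructions} (Theorem \ref{isomorfismo}); the present corollary records only the abstract consequence of Theorem \ref{funtrappr1}.
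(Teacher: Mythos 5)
Your proposal is correct and follows exactly the paper's argument: both schemes represent the same term-order-free functor $\underline{\mathrm L}_H$ by Theorem \ref{funtrappr1}, and uniqueness of representing objects (Yoneda) yields the isomorphism. The paper likewise defers the explicit construction of the isomorphism to Theorem \ref{isomorfismo}, so nothing is missing.
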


\begin{proof}
For every term order on $\Kx$, the scheme we obtain by Theorem \ref{funtrappr1} represents $\underline{\mathrm{L}}_H$. By Yoneda Lemma such a scheme is unique, up to isomorphisms.
\end{proof}


\section{Explicit construction of the isomorphisms}
\label{sec:constructions}

The functorial approach used in Sections \ref{sec:lifting functor} and \ref{sec:representability} immediately led to Corollary \ref{cor3}. However, if we consider two different term orders $\prec$ and $\prec'$ on $\Kx$, the functorial approach does not provide the isomorphism between the two schemes of Theorem \ref{funtrappr1} that we obtain by Proposition \ref{comeLR} starting from the two term orders.

Now, we explicitly construct this isomorphism. 
Let $B$ and $B'$ be the minimal sets of terms generating the initial ideals $\mathfrak j:=\In_\prec(H)$ and $\mathfrak j':=\In_{\prec'}(H)$, respectively, and let
\begin{equation*}
G:=\{ f_\alpha \ \vert \ \Ht(f_\alpha)=x^\alpha \in B \}  \hbox{ and } G':=\{ f_\beta' \ \vert \ \Ht(f_\beta)=x^\beta \in B' \}
\end{equation*}
be the reduced Gr\"obner bases of $H$ w.r.t.~$\prec$ and $\prec'$, respectively. 
Then, we have 
\begin{equation}\label{relazioni}
f_\beta'=\sum h_{\alpha \beta} f_\alpha \ \ , \ \ \forall f_\beta'\in G'.
\end{equation}
By a given term order on the terms of $K[C]$, we extend both the term orders $\prec_n$ and $\prec'_n$ to elimination term orders of the variables $\mathbf x,x_n$ on $K[C][\mathbf x,x_n]$. For simplicity, we keep on using the notations $\prec_n$ and $\prec'_n$ for these eliminations term order on $K[C][\mathbf x,x_n]$. Note that these term orders coincide when they are restricted to $K[C]$.

We define $\mathcal G:=\{f_\alpha+g_\alpha\}$, where $g_\alpha:=\sum_{{x_nx^\gamma\in \cN(J)_{\vert \alpha\vert}}}C_{\alpha\gamma}x_nx^\gamma$, and 
$\mathcal G':=\{f'_\beta+g'_\beta\}$, where $g'_\beta:=\sum_{{x_nx^\eta\in \cN(J')_{\vert\beta\vert}}}C'_{\beta\eta}x_nx^\eta$.

Recall that the set of initial terms of $\mathcal G$ with respect to $\prec_n$ is exactly $B$ and the set of initial terms of $\mathcal G'$ with respect to $\prec'_n$ is exactly $B'$: this is due to the fact that the terms in the supports of $g_\alpha$ and $g'_\beta$ are divisible by $x_n$, by construction.

Let $\mathcal{H}\subset K[C]$ be the reduced Gr\"obner basis w.r.t.~$\prec_n$ of the ideal $\mathfrak h_0$ of Theorem \ref{funtrappr1}. Thus, by Proposition \ref{comeLR}, $\mathcal G\cup\mathcal H$ is a Gr\"obner basis w.r.t.~$\prec_n$ in $K[C][\mathbf x,x_n]$.

Using the relations  \eqref{relazioni}, for every $x^{\beta}\in B'$ we define
$$p_\beta' :=\sum h_{\alpha \beta} (f_\alpha+g_\alpha)= f_{\beta}'+ \sum h_{\alpha \beta} g_\alpha \in K[C][\mathbf x,x_n].$$

\begin{lemma}\label{lemma 5.1}
The set of polynomials $\lbrace p'_\beta\rbrace_\beta\cup \mathcal H\subseteq K[C][\mathbf x,x_n]$ is a monic Gr\"obner basis w.r.t.~$\prec'_n$ of $(\mathcal G\cup \mathcal H)$.
\end{lemma}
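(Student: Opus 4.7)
The plan is to verify the three defining properties of a Gr\"obner basis---monicity, generation of the ideal $(\mathcal G\cup \mathcal H)$, and generation of the initial ideal by head terms---first modulo $\mathfrak h_0$ in $S:=R[\mathbf x,x_n]$ with $R:=K[C]/\mathfrak h_0$, and then lifting back to $K[C][\mathbf x,x_n]$ via the elimination structure of $\prec'_n$.

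First, I would check monicity. Every $g_\alpha$ is a $K[C]$-linear combination of terms divisible by $x_n$, hence so is $\sum_\alpha h_{\alpha\beta}g_\alpha$, while $f'_\beta$ is supported on $\mathbb T_{\mathbf x}$. Because $p'_\beta$ is homogeneous of degree $|\beta|$ in $\mathbf x,x_n$ and $\prec'_n$ is degreverse, every term divisible by $x_n$ is $\prec'_n$-smaller than any $x_n$-free term of the same degree. Hence $\Ht_{\prec'_n}(p'_\beta)=\Ht_{\prec'}(f'_\beta)=x^\beta\in B'$ with coefficient $1$, so $p'_\beta$ is monic; the elements of $\mathcal H$ are monic since $\mathcal H$ is reduced.

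Next I pass to $S$: let $\overline{\mathcal G}$ denote the image of $\mathcal G$. By Theorem \ref{funtrappr1} applied to the identity morphism $\mathrm{id}\colon R\to R$, the ideal $(\overline{\mathcal G})\subset S$ is the universal $x_n$-lifting of $H$. The last assertion of Theorem \ref{FM}, applied to this lifting with term order $\prec'$, yields
$$\In_{\prec'_n}\bigl((\overline{\mathcal G})\bigr)\;=\;\mathfrak j' S,$$
namely the $S$-ideal generated by $B'$. Since each $\overline{p'_\beta}\in(\overline{\mathcal G})$ is monic with head term $x^\beta\in B'$, the standard reduction algorithm for monic Gr\"obner bases over a ring shows that $\{\overline{p'_\beta}\}$ is a Gr\"obner basis of $(\overline{\mathcal G})$ with respect to $\prec'_n$ in $S$; in particular $(\overline{p'_\beta})=(\overline{\mathcal G})$.

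Finally, I lift back to $K[C][\mathbf x,x_n]$. Since $\prec'_n$ is an elimination order for $\mathbf x,x_n$ and agrees with $\prec_n$ on $K[C]$, the set $\mathcal H$ is still a Gr\"obner basis of $\mathfrak h_0\,K[C][\mathbf x,x_n]$ w.r.t.\ $\prec'_n$; and the equality $(\overline{p'_\beta})=(\overline{\mathcal G})$ lifts to $(\{p'_\beta\}\cup\mathcal H)=(\mathcal G\cup\mathcal H)$. For the Gr\"obner condition, given a nonzero $p\in(\mathcal G\cup\mathcal H)$ with leading $\mathbf x,x_n$-term $t^*$ and coefficient $c^*\in K[C]$, two cases arise: if $t^*\in J'$, then $\Ht_{\prec'_n}(p)$ is divisible by some $x^\beta\in B'=\Ht(\{p'_\beta\})$; otherwise $t^*\in\cN(J')$, and reducing $p$ modulo $\mathfrak h_0$ combined with the previous step forces $c^*\in\mathfrak h_0$, so that $\Ht_{\prec'_n}(p)=\Ht_{\prec'_n}(c^*)\cdot t^*$ is divisible by some $\Ht_{\prec'_n}(h)$ with $h\in\mathcal H$. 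The main obstacle is the middle step: correctly identifying $(\overline{\mathcal G})$ as an $x_n$-lifting over the possibly non-reduced ring $R$ in order to apply Theorem \ref{FM} and compute its $\prec'_n$-initial ideal; once this is in place, the lifting step is routine thanks to the elimination property of $\prec'_n$.
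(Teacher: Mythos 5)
Your proof is correct, but it takes a genuinely different route from the paper's. The paper argues entirely inside $K[C][\mathbf x,x_n]$ by induction on the degree in $\mathbf x,x_n$: it reduces a given element of $(\mathcal G\cup\mathcal H)$ by $\{p'_\beta\}$ to a remainder supported in $\cN(J')$, observes that this remainder is divisible by $x_n$, uses that $x_n$ is a non-zero-divisor modulo $(\mathcal G\cup\mathcal H)$ to divide it out, and closes the induction with $\mathcal H$. You instead pass to $R=K[C]/\mathfrak h_0$, identify $(\overline{\mathcal G})$ as the universal lifting via Theorem \ref{funtrappr1}, invoke the last assertion of Theorem \ref{FM} for the \emph{second} term order to get $\In_{\prec'_n}\bigl((\overline{\mathcal G})\bigr)=\mathfrak j'S$, and then lift the Gr\"obner property back through the elimination structure of $\prec'_n$ by splitting on whether the leading $\mathbf x,x_n$-term lies in $J'$ or in $\cN(J')$ (in the latter case its $K[C]$-coefficient is forced into $\mathfrak h_0$ and is handled by $\mathcal H$). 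This is legitimate and arguably more conceptual, since it makes explicit that the lemma is really Theorem \ref{FM} applied over $R$; there is no circularity, as Theorems \ref{FM} and \ref{funtrappr1} are established before Section \ref{sec:constructions}. The paper's argument is more elementary and self-contained. Two places where you should be explicit: (1) the phrase ``the standard reduction algorithm for monic Gr\"obner bases over a ring'' is doing real work --- what you need is that Theorem \ref{FM}(ii) guarantees every nonzero $f\in(\overline{\mathcal G})$ has $\prec'_n$-head term divisible by some $x^\beta\in B'$, and since each $\overline{p'_\beta}$ is \emph{monic} with head term exactly $x^\beta$, division by these elements terminates at $0$; (2) you are applying Theorem \ref{FM} over the possibly non-reduced Noetherian $K$-algebra $R$, which is covered by the theorem's hypotheses but is worth flagging, since the existence of the reduced monic Gr\"obner basis over $R$ is exactly what the paper's standing conventions supply.
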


\begin{proof}
By construction, we have $\lbrace p_\beta'\rbrace_\beta \cup \mathcal H\subseteq (\mathcal G\cup \mathcal H)$, hence $(\lbrace p_\beta'\rbrace_\beta \cup 
\mathcal H)\subseteq (\mathcal G\cup \mathcal H)$. We now prove the other inclusion. Further, we prove that $\lbrace p_\beta'\rbrace \cup \mathcal H$ is a Gr\"obner
basis of $(\mathcal G\cup \mathcal H)$ w.r.t.~$\prec'_n$, showing that every homogeneous polynomial $f$ in $(\mathcal G\cup \mathcal H)$ can be reduced to 0 by 
$\lbrace p_\beta'\rbrace \cup \mathcal H$ using $\prec_n'$. We  proceed by induction on the degree of $f$ w.r.t.~the variables $\mathbf x,x_n$.

The $0$-degree modules $(\lbrace p_\beta'\rbrace \cup \mathcal H)_0$ and $(\mathcal G\cup \mathcal H)_0$ are both equal to $\mathcal H$, which is a Gr\"obner
basis w.r.t.~$\prec_n$ and $\prec_n'$, because $\prec_n$ and $\prec_n'$ coincide on monomials of $K[C]$. We now assume $(\lbrace p_\beta'\rbrace \cup 
\mathcal H)_{m-1}=(\mathcal G\cup \mathcal H)_{m-1}$ and prove  $(\lbrace p_\beta'\rbrace \cup \mathcal H)_{m}=(\mathcal G\cup \mathcal H)_{m}$.

We consider $f\in (\mathcal G\cup \mathcal H)_m$, and  start the reduction by using the polynomials in $\lbrace p_\beta'\rbrace$:
\[
f\xrightarrow{\lbrace p'_\beta\rbrace}p, \quad \text{with } \Supp(p)\subseteq \mathcal N(J') \text{ and } p=x_n\cdot p'.
\]
By the construction of the polynomials $p_\beta'$, the polynomial $p$ belongs to $(\mathcal G \cup \mathcal H)_m$ and $x_n$ is not a $0$-divisor on $K[C][\mathbf x, x_n]/(\mathcal G\cup\mathcal H)$: hence, $p'$ belongs to $(\mathcal G \cup \mathcal H)_{m-1}$. By the inductive hypothesis, we have $p'\xrightarrow{\lbrace p'_\beta\rbrace\cup \mathcal H}0$, more precisely $p'$ is reduced to $0$ by $\mathcal H$, because $\Supp(p')\subseteq 
\mathcal N(J')$. Hence, we get $p\xrightarrow{\mathcal H}0$, as desired.
\end{proof}

We consider the unique reduced  Gr\"obner basis we obtain from $\{p'_\beta\}\cup \mathcal H$ by interreducing the polynomials $p'_\beta$ and denote by
$q_\beta'$ the reduced forms of the polynomials $p'_\beta$:
$$q'_\beta=f_{\beta}'+ x_n m_{ \beta}, \quad \text{with }\Supp(m_\beta)\subset \mathcal N(J'). $$

By comparing the polynomials $q_{\beta}'$ and $f_\beta'+g_\beta'$ we obtain a $K$-algebra morphism
\begin{equation}\label{iso}
\phi \colon K[C'][\mathbf x,x_n]\rightarrow K[C][\mathbf x,x_n]
\end{equation}
such that $\phi(x^\gamma)=x^\gamma$ and  $\phi(C'_{\beta\gamma})$ is the coefficient of $x^\gamma$ in $q'_\beta$. 

Take a term order on $K[C']$ and extend $\prec_n$ and $\prec'_n$ to $K[C'][\mathbf x,x_n]$ as done previously for $K[C][\mathbf x,x_n]$. 
Let $\mathcal{H'}\subset K[C']$ be the reduced Gr\"obner basis with respect to $\prec'_n$ of the ideal $\mathfrak h'_0$ as in Theorem \ref{funtrappr1}. Thus, by Proposition \ref{comeLR}, $\mathcal G'\cup\mathcal H'$ is a Gr\"obner basis w.r.t.~$\prec'_n$ in $K[C'][\mathbf x,x_n]$. 

\begin{theorem}\label{isomorfismo}
The $K$-algebra morphism $\phi$ of \eqref{iso} induces an isomorphism between $K[C']/\mathfrak h'_0$ and $K[C]/\mathfrak h_0$.
\end{theorem}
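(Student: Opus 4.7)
My plan is to show that $\phi$ descends to the quotients by identifying the induced map with the canonical Yoneda isomorphism between the two representing rings of $\underline{\mathrm L}_H$, and then to invoke symmetry for the inverse.

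First, I would verify that $\phi(\mathfrak h'_0)\subseteq \mathfrak h_0$, so that $\phi$ induces $\bar\phi\colon K[C']/\mathfrak h'_0 \to K[C]/\mathfrak h_0$. Set $A:=K[C]/\mathfrak h_0$ and let $\pi\colon K[C][\mathbf x,x_n]\to A[\mathbf x,x_n]$ be the canonical extension of the quotient map. By construction $\pi(\mathcal G\cup \mathcal H)=\pi(\mathcal G)$, and since $\mathcal G\cup\mathcal H$ is a $\prec_n$-Gröbner basis by Proposition \ref{comeLR}, the ideal $I:=\pi(\mathcal G)A[\mathbf x,x_n]$ has $\pi(\mathcal G)$ as its reduced $\prec_n$-Gröbner basis with head terms $B_J$. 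By Theorem \ref{FM} this means $I\in \underline{\mathrm L}_H(A)$. Now applying Lemma \ref{lemma 5.1} and interreducing yields that $\{q'_\beta\}\cup\mathcal H$ is a Gröbner basis of $(\mathcal G\cup\mathcal H)$ w.r.t.~$\prec'_n$; hence $\{\pi(q'_\beta)\}=\{f'_\beta+x_n\,\pi(m_\beta)\}$ is a Gröbner basis of $I$ w.r.t.~$\prec'_n$, and it is reduced because its head terms are $B_{J'}$ and the tails are supported on $\mathcal N(J')$. By the universal property of $K[C']/\mathfrak h'_0$ established in Theorem \ref{funtrappr1}, the lifting $I\in\underline{\mathrm L}_H(A)$ corresponds to a unique $K$-algebra morphism $K[C']/\mathfrak h'_0\to A$, obtained by sending $C'_{\beta\gamma}$ to the coefficient of $x_n x^\gamma$ in $\pi(q'_\beta)$; by the very definition of $\phi$ this coefficient is $\pi(\phi(C'_{\beta\gamma}))$. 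Hence the classifying morphism is exactly the composite $K[C']\xrightarrow{\phi|_{K[C']}} K[C]\xrightarrow{\pi} A$, which therefore kills $\mathfrak h'_0$. This proves $\phi(\mathfrak h'_0)\subseteq \mathfrak h_0$ and produces $\bar\phi$.

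Next, swap the roles of $\prec$ and $\prec'$: writing each $f_\alpha$ as a combination of elements of $G'$ and repeating the construction yields, by the same argument, a morphism $\bar\psi\colon K[C]/\mathfrak h_0 \to K[C']/\mathfrak h'_0$. To conclude, I would observe that both $\bar\phi$ and $\bar\psi$ realize, on the level of representing rings, the identity natural transformation $\underline{\mathrm L}_H\to \underline{\mathrm L}_H$: for any Noetherian $K$-algebra $A$ and any $I\in\underline{\mathrm L}_H(A)$, the two morphisms $K[C']/\mathfrak h'_0\to A$ and $K[C]/\mathfrak h_0\to A$ classify the same lifting $I$, and the reasoning above shows they are related precisely by $\bar\phi$. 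By the Yoneda Lemma, $\bar\phi$ must therefore coincide with the canonical isomorphism provided by Corollary \ref{cor3}, and $\bar\psi$ is its two-sided inverse.

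The main obstacle I anticipate is the matching-of-coefficients step: making sure that $\pi(\{q'_\beta\})$ is genuinely the \emph{reduced} $\prec'_n$-Gröbner basis of $I$ over $A$, so that Theorem \ref{FM} pins down the classifying morphism unambiguously. This requires checking that interreduction of $\{p'_\beta\}$ in $K[C][\mathbf x,x_n]$ commutes with reduction modulo $\mathfrak h_0$, which follows from the fact that $\mathcal H\subset K[C]$ and the head terms $B_{J'}$ of $q'_\beta$ are not divisible by $x_n$, so supports on $\mathcal N(J')$ remain on $\mathcal N(J')$ after applying $\pi$. Once this is in place, the functorial uniqueness argument finishes the proof with no further computation.
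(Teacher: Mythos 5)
Your proposal is correct, but it takes a genuinely different route from the paper. The paper argues directly at the level of the defining ideals: it first shows $\phi(\mathcal H')\subseteq(\mathcal H)$ by observing, via Proposition \ref{comeLR}(\ref{comeLR_ii}) and Lemma \ref{lemma 5.1}, that $\phi$ sends any combination $\sum l_\beta(f'_\beta+g'_\beta)$ supported on $\mathcal N(J')$ to $\sum\phi(l_\beta)q'_\beta$, again supported on $\mathcal N(J')$ and lying in $(\mathcal G\cup\mathcal H)$; it then builds $\psi$ symmetrically and verifies by hand that $\chi=\phi\circ\psi$ is the identity modulo $(\mathcal H)$, by checking that each $C_{\alpha\gamma}-\chi(C_{\alpha\gamma})$ is a coefficient of an element of $(\mathcal G\cup\mathcal H)$ supported on $\mathcal N(J)$ and hence lies in $(\mathcal H)$. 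You instead pass through the universal family: you take $A=K[C]/\mathfrak h_0$ with its tautological lifting $I=\pi(\mathcal G)A[\mathbf x,x_n]$, show that $\{\pi(q'_\beta)\}$ is its reduced $\prec'_n$-Gr\"obner basis, and conclude that $\pi\circ\phi|_{K[C']}$ is precisely the classifying morphism of $I$ for the second representation, so it kills $\mathfrak h'_0$ and $\bar\phi$ coincides with the canonical Yoneda isomorphism of Corollary \ref{cor3}. This buys a conceptual explanation of \emph{why} $\phi$ descends to an isomorphism (it is the Yoneda map made explicit, which is exactly the stated purpose of Section \ref{sec:constructions}) and makes the inverse automatic, at the cost of leaning on the specialization step you yourself flagged: that interreduction of $\{p'_\beta\}\cup\mathcal H$ commutes with reduction modulo $\mathfrak h_0$, so that $\pi(\{q'_\beta\})$ really is the \emph{reduced} basis over $A$. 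Your justification of that step (the tails stay supported on $\mathcal N(J')$, $\mathcal H\subset K[C]$ is killed by $\pi$, and the $q'_\beta$ are monic with head terms in $\mathbb T_{\mathbf x,x_n}$) is adequate and is the same kind of argument the paper uses implicitly via Proposition \ref{comeLR}. There is no circularity, since Theorem \ref{funtrappr1} and Corollary \ref{cor3} are established before this statement.
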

\begin{proof}

We first observe that $\phi(\mathcal H')\subseteq (\mathcal H)$. Indeed, by Proposition \ref{comeLR}(ii), $(\mathcal H')$ contains the coefficients of the terms of $\mathbb T_{\mathbf x,x_n}$ of every polynomial of kind $\sum l_\beta(f'_\beta+g'_\beta)$ with $\Supp(\sum l_\beta(f'_\beta+g'_\beta))\subseteq \mathcal N(J')$
and it is sufficient to observe 
\[
\phi(\sum l_\beta(f'_\beta+g'_\beta))=\sum  \phi(l_\beta)q'_\beta, \quad \Supp\left(\sum \phi(l_\beta) q'_\beta\right)\subseteq \mathcal N(J').
\]
Hence, by Proposition \ref{comeLR} and Lemma \ref{lemma 5.1}, the coefficients of $\phi(\sum l_\beta(f'_\beta+g'_\beta))$ belong to $(\mathcal H)$.
Repeating the construction of $\phi$ starting now from the relations $f_\alpha=\sum h'_{\alpha\beta}f_\beta'$, we obtain a $K$-algebra homomorphism 
$$\psi \colon K[C][\mathbf x,x_n]\rightarrow K[C'][\mathbf x,x_n]$$
such that $\psi(\mathcal H)\subseteq (\mathcal H')$.  Hence, it makes sense to restrict $\phi$ and $\psi$ to $K[C']/(\mathcal H')$ and $K[C]/(\mathcal H)$. 

We define $\chi:=\phi\circ\psi:K[C][\mathbf x,x_n]\rightarrow K[C][\mathbf x,x_n]$. We finally show that its restriction $\overline\chi:K[C]/(\mathcal H)
\rightarrow K[C]/(\mathcal H)$ 
is the identity.

It is sufficient to observe that the polynomial $C_{\alpha\gamma}-\chi(C_{\alpha\gamma})$ belongs to $(\mathcal H)$ for every $\alpha$ and every $\gamma$. 
Indeed, $\chi(f_\alpha+g_\alpha)=f_\alpha+\chi(g_\alpha)$. By construction of $\phi$ and $\psi$, $f_\alpha+\chi(g_\alpha)$ belongs to $(\mathcal
G\cup \mathcal H)$.
In particular, $f_\alpha+g_\alpha-(f_\alpha+\chi(g_\alpha))=g_\alpha-\chi(g_\alpha)$ is a polynomial in $(\mathcal G\cup \mathcal H)$ {with 
$\Supp(g_\alpha-\chi(g_\alpha))\subset\mathcal N(J)$}. Hence, by Proposition \ref{comeLR}, the coefficients of the terms of $\mathbb T_{\mathbf x,x_n}$ of $g_\alpha-\chi(g_\alpha)$ belong to $(\mathcal H)$. Then, in $K[C]/(\mathcal H)$ we have $C_{\alpha\gamma}=\overline\chi(C_{\alpha\gamma})$, hence $\overline\chi$ is the identity on $K[C]/(\mathcal H)$.
\end{proof}

In Example \ref{esiso} we will exhibit an explicit computation of the isomorphism described in Theorem \ref{isomorfismo}. We observe that this isomorphism 
$K[C]/\mathfrak h_0\simeq K[C']/\mathfrak h'_0$ is constructed from  $\phi:K[C]\rightarrow K[C']$, which in general is not an isomorphism.


\section{The torus action on  the scheme of liftings}
\label{sec:torus}

It is obvious by the definition that the $x_n$-liftings of an ideal $H\subset \Kx$ have the same Hilbert function, then also the same Hilbert polynomial. Hence, they define points of a same Hilbert scheme. In this section, exploiting the relation with the Hilbert scheme, we will obtain several interesting properties of the scheme of liftings. In particular, we investigate  an action of the  torus $K^*:=K\setminus \{0\}$   on $\mathrm L_H$.

{\bf From now on, we assume that the ground field $K$ is infinite.}

\begin{proposition}\label{cor1}
Consider $J=\mathrm{in}_\prec(H)K[\mathbf x,x_n]$ and let $\overline p(t)$ be the Hilbert polynomial of $\Kox/J$. Then, $\underline{\mathrm L}_H$ is a closed subfunctor of $\underline{\mathrm{St}}_J^{\prec_n}$ and $\mathrm L_H$ is a locally closed subscheme of the Hilbert scheme $\mathcal{H}ilb^n_{\overline p(t)}$.
\end{proposition}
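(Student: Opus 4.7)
The plan is to realise $\underline{\mathrm L}_H$ as a closed subfunctor of $\underline{\mathrm{St}}_J^{\prec_n}$ and then invoke Theorem \ref{drv} to embed the Gr\"obner stratum scheme into $\mathcal{H}ilb^n_{\overline p(t)}$ as a locally closed subscheme. Composing these two embeddings realises $\mathrm L_H$ as a locally closed subscheme of the Hilbert scheme.

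First I would check the inclusion $\underline{\mathrm L}_H(A) \subseteq \underline{\mathrm{St}}_J^{\prec_n}(A)$ for every Noetherian $K$-algebra $A$. By Theorem \ref{FM}, any $x_n$-lifting $I$ of $H$ has $\mathrm{in}_{\prec_n}(I)$ generated by the same terms as $\mathrm{in}_\prec(H)$; since these generators lie in $\mathbb T_{\mathbf x}$, we get $\mathrm{in}_{\prec_n}(I) = J \otimes_K A$, so $I$ lies in $\underline{\mathrm{St}}_J^{\prec_n}(A)$. This inclusion is compatible with base change because both functors use the rule $I \mapsto I \otimes_A B$ and monic Gr\"obner bases behave well under extension of scalars (as recalled at the beginning of Section \ref{sec:grobner}), so we obtain a natural transformation $\underline{\mathrm L}_H \hookrightarrow \underline{\mathrm{St}}_J^{\prec_n}$.

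Next I would show that this natural transformation is a closed immersion of functors. In the universal family \eqref{JbaseC} of $\mathrm{St}_J^{\prec_n}$, each polynomial $F_\alpha$ splits according to whether a term of its support is divisible by $x_n$ or not:
\[
F_\alpha = \Bigl(x^\alpha + \sum_{x^\gamma \in \mathbb T_{\mathbf x} \cap \mathcal N(J)_{|\alpha|}} C_{\alpha\gamma}\, x^\gamma\Bigr) + \sum_{x_n \mid x^\gamma} C_{\alpha\gamma}\, x^\gamma .
\]
By condition (b$'$) of Definition \ref{def:lifting} combined with Theorem \ref{FM}, an ideal $I \in \underline{\mathrm{St}}_J^{\prec_n}(A)$ is an $x_n$-lifting of $H$ if and only if, for each $\alpha$, the image of $F_\alpha$ under $x_n \mapsto 0$ coincides with the element $f_\alpha$ of the reduced Gr\"obner basis of $H$. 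This translates into the finite system of linear equations $C_{\alpha\gamma} = c_{\alpha\gamma}$, one for each $x^\gamma \in \mathbb T_{\mathbf x}$ appearing in the support of some $F_\alpha$, where $c_{\alpha\gamma}$ is the coefficient of $x^\gamma$ in $f_\alpha$. These equations are preserved under every base change, hence cut out a closed subfunctor whose representing scheme is a closed subscheme of $\mathrm{St}_J^{\prec_n}$; by Theorem \ref{funtrappr1} and Corollary \ref{cor3}, this subscheme is $\mathrm L_H$.

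Finally, since $J = \mathrm{in}_\prec(H)\cdot K[\mathbf x,x_n]$ has monomial basis $B_J = B_{\mathrm{in}_\prec(H)} \subset \mathbb T_{\mathbf x}$, the hypotheses of Theorem \ref{drv} are met, and so $\mathrm{St}_J^{\prec_n}$ embeds as a locally closed subscheme of $\mathcal{H}ilb^n_{\overline p(t)}$; composing with the closed immersion $\mathrm L_H \hookrightarrow \mathrm{St}_J^{\prec_n}$ concludes the proof. The main obstacle I expect is the functorial bookkeeping in the middle step: one must verify that specialising $x_n \mapsto 0$ commutes with the tensor product $-\otimes_A B$ on the universal Gr\"obner basis, so that the linear conditions $C_{\alpha\gamma} = c_{\alpha\gamma}$ genuinely define the pullback $\mathrm{Spec}(A) \times_{\mathrm{St}_J^{\prec_n}} \mathrm L_H$ for every $T$-point $\mathrm{Spec}(A) \to \mathrm{St}_J^{\prec_n}$. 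This ultimately reduces to the base-change property of monic Gr\"obner bases, but deserves an explicit check.
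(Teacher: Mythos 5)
Your proposal is correct and follows essentially the same route as the paper: the paper's proof likewise realises $\mathrm L_H$ as the linear section of $\mathrm{St}_J^{\prec_n}$ cut out by the equations $C_{\alpha\gamma}-c_{\alpha\gamma}$ (for the terms $x^\gamma$ not divisible by $x_n$) and then invokes Theorem \ref{drv} for the locally closed embedding into $\mathcal{H}ilb^n_{\overline p(t)}$. You merely spell out the functorial base-change verifications that the paper leaves implicit.
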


\begin{proof}
We compute the ideal $\mathfrak a$ defining $\mathrm{St}_J^{\prec_n}$ and the reduced Gr\"obner basis $G$ of $H$. Then, we consider the linear section of $\mathrm{St}_J^{\prec_n}$ obtained by the polynomials $C_{\alpha \gamma}-c_{\alpha \gamma}$, where $c_{\alpha \gamma}$ is the coefficient of $x^\gamma$ in the polynomial $f_\alpha$ of $G$. Therefore, $\mathrm L_H$ is a closed subscheme of $\mathrm{St}_J^{\prec_n}$, hence a locally closed subscheme of $\mathcal{H}ilb^n_{\overline p(t)}$, by Theorem \ref{drv}.
\end{proof}

\begin{remark}\label{families}
Note that, similarly to the Hilbert scheme, the scheme $\mathrm{L}_H$ contains not only the $x_n$-liftings of $H$ in $\Kox$, which correspond to the closed $K$-points of $\mathrm{L}_H$, but also families of $x_n$-liftings, which correspond to $x_n$-liftings in $\Aox$, with $A$ a $K$-algebra and $\mathrm{Spec}(A)$ as space of parameters. 

More generally, let  us consider  a $K$-algebra  $R$ such that $U=\Spec(R)$ is  the space of parameters of a flat family of homogeneous   ideals  in $K[\mathbf x]$.  If $ \overline p(t)$ is the Hilbert polynomial of the  schemes in $\mathbb P^n$ defined by the extensions of such ideals  to $K[\mathbf x, x_n]$,   we can define the scheme of $x_n$-liftings $\mathrm{L}_U$ of $U$.  In a natural way $\mathrm{L}_U$  can be seen as   a locally closed subscheme of $\mathcal{H}ilb^n_{\overline p(t)}$. 
\end{remark}

\begin{corollary}\label{cor4}
For every homogeneous ideal $H\subset \Kx$, the locus of the radical liftings  of $H$ is an open subset in $\mathrm {L}_H$. 
\end{corollary}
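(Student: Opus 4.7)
The plan is to derive this openness from the embedding of $\mathrm{L}_H$ into a Hilbert scheme established in Proposition \ref{cor1}. First I would note that every $I\in \underline{\mathrm{L}}_H(A)$ is by definition saturated and has, by Theorem \ref{FM}, the same initial ideal $J=\mathrm{in}_\prec(H)K[\mathbf x,x_n]$, so the corresponding flat family of subschemes of $\mathbb P^n_A$ shares the Hilbert polynomial $\overline p(t)$. By Proposition \ref{cor1}, this identifies $\mathrm{L}_H$ with a locally closed subscheme of $\mathcal{H}ilb^n_{\overline p(t)}$, and under this identification a lifting is a radical ideal precisely when the associated closed subscheme of $\mathbb P^n$ is reduced.

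Next, I would appeal to the classical fact that, for any flat projective family $\pi\colon \mathcal X\to S$ over a Noetherian base, the locus
\[
\{s\in S \,:\, \mathcal X_s \text{ is geometrically reduced}\}
\]
is open in $S$ (see e.g.\ EGA IV$_3$, Théorème 12.2.4, applied to the property of being reduced, which is constructible and stable under generization for flat morphisms). Applying this to the universal family over $\mathcal{H}ilb^n_{\overline p(t)}$ yields an open subset $U\subseteq \mathcal{H}ilb^n_{\overline p(t)}$ parameterising reduced subschemes.

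To finish, I would intersect: the locus of radical liftings is precisely $U\cap \mathrm{L}_H$, which is open in $\mathrm{L}_H$ by the locally closed embedding of Proposition \ref{cor1}. The only conceptual point that needs attention is matching up the two notions of ``radical''; since the infinite base field guarantees that for a reduced closed subscheme of $\mathbb P^n_K$ the saturated defining ideal is radical (and vice versa), and since this transfers to the relative setting via flatness, the identification is harmless. No hard calculation is required; the substantive content was already absorbed into Proposition \ref{cor1} and Theorem \ref{drv}.
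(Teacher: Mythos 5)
Your proposal is correct and follows essentially the same route as the paper: embed $\mathrm{L}_H$ into $\mathcal{H}ilb^n_{\overline p(t)}$ via Proposition \ref{cor1} and invoke the openness of the locus of reduced fibres from EGA IV (the paper cites Th\'eor\`eme (12.2.1)(viii); your reference to 12.2.4 is the same circle of results). The extra care you take in identifying ``radical saturated ideal'' with ``reduced subscheme'' is a reasonable elaboration of a point the paper leaves implicit.
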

\begin{proof}
By Proposition \ref{cor1}, ${\mathrm {L}}_H$ can be embedded in a Hilbert scheme $\mathcal{H}ilb^n_{\overline p(t)}$, hence   the locus of radical ideals of ${\mathrm {L}}_H$ is an open subscheme of ${\mathrm {L}}_H$, because the locus of points of $\mathcal{H}ilb^n_{\overline p(t)}$ corresponding to reduced schemes is open (see \cite[Th\'{e}or\`{e}me (12.2.1)(viii)]{Gro}). 
\end{proof}

{
It is well-known that  the Hilbert scheme is invariant under the action of the general linear group $GL_{n+1}$ of the invertible matrices corresponding to the change of coordinates in $\mathbb P^n$. The scheme of liftings  $\mathrm{L}_H$ is not invariant under the action of the whole $GL_{n+1}$, but it is  interesting to consider the action of some subgroups of $GL_{n+1}$ on it. 

For instance, the more general definition of $x_n$-liftings recalled in Remark \ref{piugenerale} can be reformulated saying that the action of any element $\theta$ of $GL_{n}$, trivially extended to $\mathbb P^n$ by setting $x_n \rightarrow x_n$, transforms $\mathrm{L}_H$ into the scheme of $x_n$-liftings of $\theta(H)$, that is $\theta(\mathrm L_H)=\mathrm{L}_{\theta(H)}$.  If, in particular, $H$ is fixed by the action of $\theta $, then $\theta(\mathrm L_H)=\mathrm L_H$.  We will use a similar group action in the proof of Theorem \ref{thm:liftACMdue}.

The scheme $\mathrm{L}_H$ is also invariant under the action of the  subgroup $\{\mu_t\,  :  t\in K^*\, \}$ of $GL_{n+1}$, where $\mu_t$ corresponds  to  $x_i \rightarrow x_i$ for every $i\not=n$ and $x_n\rightarrow tx_n$. This subgroup is canonically isomorphic to the torus $K^*$ and we will refer to this action as the {\it torus action} on $\mathrm{L}_H$.
 Note that $H\Kox$ is the unique fixed point of $\mathrm L_H$. 

In the following results we exploit the invariance of $\mathrm{L}_H$ under the torus action to prove that the ideal $\mathfrak h_0$ defining it is homogeneous with respect to a non-standard grading. An analogous result holds for Gr\"obner strata (see \cite{FR}); for similar more general results about varieties with a group action we refer to \cite{BB73,BB76,Fo}.

\begin{proposition} \label{prop:smooth}
The ideal $\mathfrak h_0\subset K[C]$ of Definition \ref{defh0} is homogeneous with respect to the grading induced by the weight vector $\omega(C_{\alpha \gamma})=\gamma_n+1$, where $C_{\alpha\gamma}$ is the coefficient of the term $x_nx^\gamma$ in \eqref{code} and $\gamma_n$ is the exponent of $x_n$ in $x^\gamma$. 
\end{proposition}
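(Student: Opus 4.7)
The plan is to exploit the torus action introduced just before the statement. The $K^*$-action $\mu_t$ fixes $H$ (since $H \subset K[\mathbf x]$ and $\mu_t$ acts trivially on $x_0,\ldots,x_{n-1}$), so it preserves $\mathrm L_H$ and the corresponding $K$-algebra automorphism of $K[C]/\mathfrak h_0$ can be lifted to a $K$-algebra automorphism $\tau_t$ of $K[C]$ whose explicit form forces $\mathfrak h_0$ to be $\omega$-homogeneous.

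Concretely, I would first work out how $\mu_t$ operates on the universal Gr\"obner basis $\mathcal G = \{f_\alpha + \sum_\gamma C_{\alpha\gamma} x_n x^\gamma\}$. Since each $f_\alpha$ lies in $K[\mathbf x]$, one has $\mu_t(f_\alpha) = f_\alpha$, while $\mu_t(x_n x^\gamma) = t^{\gamma_n+1}\, x_n x^\gamma$. Hence for any $A$-point $I \in \underline{\mathrm L}_H(A)$ with Gr\"obner basis $\{f_\alpha + \sum c_{\alpha\gamma} x_n x^\gamma\}$, the ideal $\mu_t(I)$ admits Gr\"obner basis $\{f_\alpha + \sum (t^{\gamma_n+1} c_{\alpha\gamma}) x_n x^\gamma\}$, again of the shape required by Theorem \ref{FM}. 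Translating this through the identification $\mathrm L_H = \mathrm{Spec}(K[C]/\mathfrak h_0)$ of Theorem \ref{funtrappr1}, the morphism $I \mapsto \mu_t(I)$ is induced by the $K$-algebra homomorphism $\tau_t\colon K[C] \to K[C]$ defined on generators by $C_{\alpha\gamma} \mapsto t^{\gamma_n+1} C_{\alpha\gamma}$. Because $\tau_{t^{-1}}$ provides an explicit inverse, $\tau_t$ is a $K$-algebra automorphism, and the fact that $\mu_t$ is a well-defined endomorphism of $\mathrm L_H$ gives $\tau_t(\mathfrak h_0) \subseteq \mathfrak h_0$; applied to $t$ and $t^{-1}$ this yields the equality $\tau_t(\mathfrak h_0) = \mathfrak h_0$ for every $t \in K^*$.

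Finally, I would extract homogeneity from this one-parameter family of automorphisms. For any $p \in K[C]$, write $p = \sum_d p_d$ with $p_d$ the component of $\omega$-degree $d$; then $\tau_t(p) = \sum_d t^d p_d$. If $p \in \mathfrak h_0$, the relation $\tau_t(p) \in \mathfrak h_0$ for every $t \in K^*$, combined with the infinitude of $K$ and a standard Vandermonde argument applied to finitely many distinct values of $t$, forces each $p_d$ to belong to $\mathfrak h_0$. Hence $\mathfrak h_0$ is homogeneous with respect to the grading $\omega(C_{\alpha\gamma}) = \gamma_n + 1$. The main technical point to verify carefully is the functorial interpretation that identifies the geometric action $\mu_t$ on $\mathrm L_H$ with the diagonal rescaling $\tau_t$ on $K[C]$; once this is in place, the Vandermonde step is routine, and it is essentially the only place where infiniteness of $K$ is used.
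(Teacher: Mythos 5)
Your argument is correct, but it is genuinely different from the one in the paper. The paper's proof is a one-line direct verification: it extends $\omega$ to a grading on $K[\mathbf x,x_n,C]$ by setting $\omega(x_i)=1$ for $i<n$, $\omega(x_n)=0$ and $\omega(C_{\alpha\gamma})=\gamma_n+1$, observes that each $f_\alpha+g_\alpha$ in \eqref{code} is then homogeneous of $\omega$-degree $\vert\alpha\vert$ (since the term $x_nx^\gamma$ contributes $\omega$-degree $\vert\gamma\vert-\gamma_n=\vert\alpha\vert-1-\gamma_n$), and concludes because $S$-polynomials and complete reductions preserve this homogeneity, so the coefficients generating $\mathfrak h_0$ are $\omega$-homogeneous. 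That argument is purely Gr\"obner-theoretic and works over any field. Your route instead implements geometrically the slogan the paper only alludes to in the surrounding discussion: you identify the torus action $\mu_t$ with the diagonal substitution $\tau_t\colon C_{\alpha\gamma}\mapsto t^{\gamma_n+1}C_{\alpha\gamma}$ via representability (this identification is sound: the ideal generated by $\{f_\alpha+\sum t^{\gamma_n+1}\bar C_{\alpha\gamma}x_nx^\gamma\}$ over $K[C]/\mathfrak h_0$ is again a lifting, and the uniqueness of the classifying morphism in Theorem \ref{funtrappr1} forces $\tau_t(\mathfrak h_0)\subseteq\mathfrak h_0$), and then extract homogeneity by a Vandermonde argument. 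The trade-offs: your proof needs $K$ infinite, which is a standing assumption in that section but is not needed for the paper's proof, and it leans on the functorial machinery (naturality of $I\mapsto\mu_t(I)$ under base change, Yoneda), whereas the paper's is elementary; in exchange, your proof makes transparent \emph{why} the grading is the right one, namely that it is exactly the character decomposition for the $K^*$-action, which is the conceptual content the paper attributes to \cite{BB73,BB76,Fo}.
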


\begin{proof}
Let $\mathcal G$ be the set of polynomials in $K[\mathbf x, x_n, C]$ of \eqref{code} that we use to construct the ideal $\mathfrak h_0$. The polynomials of $\mathcal G$ are homogeneous with respect to the grading in $K[\mathbf x ,x_n,C]$ induced by the weights $\omega(x_i)=1$ for $i=0, \dots, n-1$, $\omega(x_n)=0$ and  $\omega(C_{\alpha \gamma})=\gamma_n+1$. Then, by construction, also 
$\mathfrak h_0$ is a homogeneous ideal with respect to this grading.
\end{proof}

\begin{corollary}\label{cor2}
Let $\overline y_0$ be  the point of the scheme of $x_n$-liftings  $\mathrm {L}_H$ that corresponds  to $HK[\mathbf x, x_n ]$. Then
\begin{enumerate}[(i)]
\item  all the irreducible components of  $\mathrm {L}_H$ contain the point $\overline y_0$;
\item  $\mathrm {L}_H$  can be isomorphically embedded into the Zariski tangent space at the point $\overline y_0$;
\item  if $\overline y_0$ is smooth, then $\mathrm{L}_H$ is isomorphic to an affine space.
\end{enumerate}
\end{corollary}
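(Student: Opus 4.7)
The common engine for all three parts is Proposition \ref{prop:smooth}, which asserts that $\mathfrak h_0 \subset K[C]$ is homogeneous with respect to the \emph{strictly positive} weights $\omega(C_{\alpha\gamma}) = \gamma_n+1 \geq 1$. Hence $R := K[C]/\mathfrak h_0$ is a positively graded, finitely generated $K$-algebra with $R_0 = K$, and the maximal ideal $\mathfrak m \subset R$ of the point $\overline y_0$ is exactly the irrelevant ideal $R_+$. With this setup, each part of the corollary reduces to a standard graded-algebra argument.

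For (i), the minimal primes $\mathfrak p$ of $\mathfrak h_0$ are themselves homogeneous, because minimal primes of a homogeneous ideal in a $\mathbb Z$-graded ring are homogeneous. No such $\mathfrak p$ can contain a nonzero constant (otherwise $\mathfrak p = K[C]$), so every homogeneous generator of $\mathfrak p$ has strictly positive weight and therefore lies in $K[C]_+$. Hence $\mathfrak p \subseteq K[C]_+$, and $\overline y_0 \in V(\mathfrak p)$ for every minimal prime; that is, every irreducible component of $\mathrm L_H$ passes through $\overline y_0$.

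For (ii), set $N := \dim_K \mathfrak m/\mathfrak m^2$. Since the images of the $C_{\alpha\gamma}$ span $\mathfrak m/\mathfrak m^2$, I would pick $N$ of them, say $C_{\alpha_1\gamma_1}, \ldots, C_{\alpha_N\gamma_N}$, whose classes form a homogeneous $K$-basis. By the graded Nakayama lemma for positively graded $K$-algebras, these lifts generate $R$ as a $K$-algebra, yielding a weight-respecting surjection
\[
\pi \colon K[T_1, \ldots, T_N] \twoheadrightarrow R, \qquad T_i \mapsto \overline{C_{\alpha_i\gamma_i}},
\]
where $T_i$ is assigned weight $\omega(C_{\alpha_i\gamma_i})$. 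The homogeneous kernel $I := \ker \pi$ satisfies $I \subseteq (T)^2$: otherwise a weight-homogeneous element of $I$ would produce a nonzero image in $(T)/(T)^2$, hence a nontrivial $K$-linear dependence among the classes $\overline{C_{\alpha_i\gamma_i}}$ in $\mathfrak m/\mathfrak m^2$, contrary to the choice of basis. The induced closed immersion $\Spec R \hookrightarrow \Spec K[T_1, \ldots, T_N]$ then realizes $\mathrm L_H$ as a closed subscheme of an affine space which, via the basis choice, is canonically identified with the Zariski tangent space $T_{\overline y_0}\mathrm L_H$.

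For (iii), smoothness at $\overline y_0$ means $R_{\mathfrak m}$ is regular of dimension $N$. By part (i) every prime of $R$ is contained in $\mathfrak m$, so $\dim R = \dim R_{\mathfrak m} = N$. By (ii), $R \cong K[T_1, \ldots, T_N]/I$ with $I \subseteq (T)^2$; since $K[T_1, \ldots, T_N]$ is an $N$-dimensional domain, any nonzero ideal $I$ would strictly lower the Krull dimension, so $I = 0$ and $\mathrm L_H \cong \mathbb A^N_K$. The whole argument is essentially formal once Proposition \ref{prop:smooth} is available; the only step that requires genuine care is the application of Nakayama in (ii), where one must consistently track the nonstandard weights $\omega$ rather than the ordinary polynomial degree.
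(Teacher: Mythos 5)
Your proof is correct, but it reaches the conclusion by a different (and more self-contained) route than the paper. For part (i) the paper works geometrically: after base-changing to $\overline K$ it uses the torus action $\mu_t$ to produce, for each closed point $y$ of $\mathrm L_H$, a morphism $\phi_y\colon \mathbb A^1 \to \mathrm L_H$ with $\phi_y(1)=y$ and $\phi_y(0)=\overline y_0$, so that every component is dragged onto $\overline y_0$. You instead use the algebraic shadow of that action, namely Proposition \ref{prop:smooth}: since the weights $\omega(C_{\alpha\gamma})=\gamma_n+1$ are strictly positive, every minimal prime of the homogeneous ideal $\mathfrak h_0$ is itself homogeneous and hence contained in the irrelevant ideal $(C)$, which is exactly the ideal of $\overline y_0$. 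The two arguments are two faces of the same $K^*$-contraction, but yours has the small advantage of avoiding the reduction to an algebraically closed field and of working directly with primes rather than closed points. For parts (ii) and (iii) the paper simply cites \cite[Corollary 3.3]{FR}, whereas you reconstruct the standard argument for positively graded $K$-algebras (graded Nakayama, kernel contained in the square of the irrelevant ideal, dimension count using (i)); your reconstruction is the expected content of that citation and is carried out correctly, including the one genuinely delicate point, namely that weight-homogeneous elements of the kernel need not be homogeneous for the standard degree, so one must extract their standard-degree-one part to contradict the choice of basis of $\mathfrak m/\mathfrak m^2$.
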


\begin{proof}
For  (i), we assume that $K$ is algebraically closed. In fact,  if $\Spec(\overline K[C]/\mathfrak h_0\overline K[C]) $ is connected, then also $\Spec(K[C]/\mathfrak h_0)$ is. 
Let $y$ be a closed point of $\mathrm {L}_H$ and let  $I\subseteq K[\mathbf x ,x_n]$ be the corresponding  $x_n$-lifting of $H$. 
If $G=\{f_\alpha\}_\alpha$ is the reduced Gr\"obner basis of $H$  and $G_I=\{f_\alpha+\sum c_{\alpha\gamma}x_nx^\gamma\}_\alpha$ is  the reduced Gr\"obner basis of $I$, for every $t\in K$ we let $I(t):=\mu_t I$  be the ideals obtained by the torus action on $I$. Then, 
$G(t):=\lbrace f_\alpha+\sum c_{\alpha\gamma}t^{\gamma_n+1}x_nx^\gamma\rbrace_\alpha$ 
is the Gr\"obner basis  of $I(t)$  for every $t\in K^*$, in particular $G(1)=G_I$ is that of $I$; moreover   for $t=0$ we have  $G(0)=G$. In this way we obtain a morphism of schemes $\phi_y \colon \mathbb A^1 \rightarrow \mathrm {L}_H$ such that  $\phi(1)=y$ and $\phi(0)=\overline y_0$.   

To prove (ii) and (iii), see for example \cite[Corollary 3.3]{FR}. 
\end{proof}  

\begin{remark}\label{eliminazione}  The above result has  interesting consequences for what concerns the efficiency of the computations.
Quoting the introduction of  \cite{CV}, when we perform the computation of ideals defining Gr\"obner stata \lq\lq many of the equations \dots contain parameters that appear in degree $1$ and that can be eliminated\rq\rq. This same phenomenon can be observed about the ideals that define schemes of liftings. Corollary \ref{cor2} (iii)  and the similar result  for Gr\"obner stata  \cite[Corollary 3.3]{FR} explain  the reason that produces this effect:   the elimination of variables through  elements of $\mathfrak h_0$  corresponds  to the embedding  of $\mathrm L_H$ into the Zariski tangent space at the point $\overline y_0$.  A very efficient way to produce this elimination consists in computing the Gr\"obner basis of $\mathfrak h_0\subset K[C]$ with respect to the  lexicographic  term order  with the variables $C$ ordered according to the weight vector $\omega(C_{\alpha \gamma})=\gamma_n+1$ of Proposition \ref{prop:smooth}. Indeed,  if  $h\in \mathfrak h_0$ is a homogeneous polynomial w.r.t. the weights given by $\omega$ and has a non-zero part of standard degree $1$, then  $\Ht(h)$  is a  term of standard degree $1$, namely is a variable $C_{\alpha \gamma}$,   and   $h$ allows the elimination of this variable.
\end{remark}
}


\section{Liftings of an arithmetically Cohen-Macaulay scheme of codimension $2$}
\label{sec:acm}

In this section, we study the $x_n$-liftings of a  saturated  ideal $H$ defining   an arithmetically Cohen-Macaulay (aCM for short) scheme  of codimension $2$ in $\mathbb P^{n-1}_K$, hence $n\geq 3$.  {\bf For sake of simplicity, in the following  we will call aCM ideal of codimension $2$ every homogeneous saturated ideal defining a  scheme with these properties.} 

Given such an  ideal $H\subset \Kx$ and a $x_n$-lifting $I\subset \Kox$ of $H$,  we denote by $G=\{f_\alpha\}_\alpha$   the reduced Gr\"obner basis of $H$ w.r.t.~the degrevlex order $\prec$ in $K[\mathbf x]$ and by  $G_I=\{f_\alpha+g_\alpha\}_\alpha$  the reduced Gr\"obner basis of $I$ w.r.t~$\prec_n$, as in Theorem \ref{FM}. Note that $\prec_n$ is the degrevlex order in $\Kox$.  We denote by $\mathfrak j$ the initial ideal $\mathrm{in}_{\prec}(H)$ and by $J$ the initial ideal $\mathrm{in}_{\prec_n}(I)$. 
Recall that these ideals are generated by the same minimal set of terms  $B_{\mathfrak j}$. Let moreover $p(t)$ be the Hilbert polynomial of $K[\mathbf x]/\mathfrak j$ and $\overline p(t)$ that of $\Kox/J$.

Up to a suitable linear change of coordinates, the variables $x_2,\ldots,x_{n-1}$ form a regular sequence for $\Kx/H$ and also for $\Kx/\mathfrak j$. Indeed, being $\Kx/H$  Cohen-Macaualy of codimension two, by the graded prime avoidance lemma, we can find a regular sequence for $\Kx/H$ consisting of $n-2$ linear forms $l_2,\ldots,l_{n-1}$. After the linear change of coordinates $\phi$ that sets $l_2\mapsto x_2,\ldots,l_{n-1}\mapsto x_{n-1}$, by \cite[Lemma (2.2)]{BS2} we obtain that the initial ideal of $\phi(H)$ is generated by terms which are not
divisible by $x_2,\ldots,x_{n-1}$ (see also Remark \ref{rem:degreverse}).
In particular, $\mathfrak j $ is a Cohen-Macaulay ring of codimension $2$. 

From now \bcr on \ecr, we  may suppose that $x_2,\ldots,x_{n-1}$ form a regular sequence for $\Kx/H$ and  for $\Kx/\mathfrak j$. Therefore, $B_{\mathfrak j}$ is contained in $ K[x_0, x_1]$.

{
\begin{lemma}\label{aCM}  In the above hypotheses, if $I\subset \Kox$ is a $x_n$-lifting of $H$, then $I$ defines an aCM scheme of codimension $2$ in $\mathbb P^n$.
\end{lemma}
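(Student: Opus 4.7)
The plan is to show that $K[\mathbf{x},x_n]/I$ is Cohen--Macaulay of the right codimension by exhibiting a regular sequence whose length matches the Krull dimension. Everything reduces to combining the defining property of an $x_n$-lifting with the regular sequence we already have on $K[\mathbf{x}]/H$.

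First, I would compute the codimension of $I$. By the last sentence of Theorem \ref{FM}, $\mathrm{in}_{\prec_n}(I)$ is generated by the same set of terms as $\mathfrak{j}=\mathrm{in}_{\prec}(H)$, and by the standing hypothesis these terms lie in $K[x_0,x_1]$. Equivalently, since $x_n$ is a non-zero-divisor on $K[\mathbf{x},x_n]/I$ (condition (a) of Definition \ref{def:lifting}) and $K[\mathbf{x},x_n]/(I,x_n)\cong K[\mathbf{x}]/H$ has Krull dimension $n-2$, we obtain $\dim K[\mathbf{x},x_n]/I=(n-2)+1=n-1$, so $I$ has codimension $2$ in $K[\mathbf{x},x_n]$, i.e.\ defines a codimension-$2$ scheme in $\mathbb{P}^n$.

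Next, I would exhibit a regular sequence of length $n-1$ on $K[\mathbf{x},x_n]/I$, in the order $x_n,x_2,\ldots,x_{n-1}$. The element $x_n$ is a non-zero-divisor by condition (a) of Definition \ref{def:lifting}. Modding out by $x_n$ gives $K[\mathbf{x},x_n]/(I,x_n)\cong K[\mathbf{x}]/H$ via condition (b), and by the standing hypothesis $x_2,\ldots,x_{n-1}$ form a regular sequence on $K[\mathbf{x}]/H$. Splicing these together, $x_n,x_2,\ldots,x_{n-1}$ is a regular sequence of length $n-1$ on $K[\mathbf{x},x_n]/I$, so $\mathrm{depth}(K[\mathbf{x},x_n]/I)\geq n-1 =\dim K[\mathbf{x},x_n]/I$, forcing equality. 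Hence $K[\mathbf{x},x_n]/I$ is Cohen--Macaulay of codimension $2$, as claimed.

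The argument is essentially bookkeeping once one has Theorem \ref{FM} and the standing hypothesis on the variables $x_2,\ldots,x_{n-1}$; the only subtle point is to place $x_n$ first in the regular sequence so that its being a non-zero-divisor is exactly what the definition of $x_n$-lifting gives us, while the remaining variables can then be handled using the established regular sequence on $K[\mathbf{x}]/H$.
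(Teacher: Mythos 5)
Your proof is correct and follows essentially the same route as the paper: both arguments use that $x_n$ is a non-zero-divisor on $\Kox/I$ together with $\Kox/(I,x_n)\cong \Kx/H$ to splice $x_n$ onto the regular sequence $x_2,\dots,x_{n-1}$, and both read off the Krull dimension from the initial ideal $\mathrm{in}_{\prec_n}(I)=(B_{\mathfrak j})\Kox$ via Theorem \ref{FM}. (Your dimension count $\dim \Kox/I=n-1$ is the right one.)
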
 

\begin{proof} 
By definition of $x_n$-lifting, $x_n$ is not a zero-divisor in $K[\mathbf x, x_n]/I$ and $K[\mathbf x, x_n]/(I+(x_n))$ is canonically isomorphic to $K[\mathbf x]/H$.  Hence, $x_n, x_{n-1}, \dots, x_{2}$ is a regular sequence for $K[\mathbf x, x_n]/I$. Moreover, $\mathrm{in}_{\prec_n}(I)=(B_{\mathfrak j})\Kox$,  hence the  Krull dimension of $K[\mathbf x, x_n]/I$ is $n-2$. 
\end{proof}
}
\begin{remark}
Note that the statement of Lemma \ref{aCM} holds also for a $x_n$-lifting of any saturated ideal defining an aCM scheme. 
\end{remark}

Recall that, being $\Kx/H$ Cohen-Macaulay of codimension two, there is a graded free resolution of type:
\begin{equation}\label{eq:acm resolution}
0 \longrightarrow \Kx^{a} \buildrel{\psi_2}\over\longrightarrow \Kx^{a+1} \buildrel{\psi_1}\over\longrightarrow \Kx \longrightarrow \Kx/H \longrightarrow 0,
\end{equation} 
and the Hilbert-Burch Theorem guarantees that the $a\times a$ minors of the $a\times (a+1)$ matrix of the homomorphism $\psi_2$ in \eqref{eq:acm resolution} form a set of generators for $H$. The rows of this matrix generate the first syzygies module of $H$. Vice versa, a $2$-codimensional scheme defined by the $a\times a$ minors of a matrix of type $a\times (a+1)$ of maximal rank is an aCM  scheme (e.g., see \cite[Theorem 20.15 and the paragraph that follows the proof]{Ei}).

For the sake of completeness, by the following Lemma we recall some standard results about syzygies (see for example \cite{KR1}) and a useful fact proved in \cite{FlR}. We will call a {\em lifting of a minimal free resolution of $\mathfrak j$} every complex obtained lifting the syzygies of $B_{\mathfrak j}$ in the usual sense of the theory of Gr\"obner bases. \bcR See \cite[Lemma 3.2]{Co} for a result similar to Lemma \ref{lemma:sizigie}\eqref{lemma:sizigie_ii}.\ecr

\begin{lemma}\label{lemma:sizigie}
In the above setting, 
\begin{enumerate}[(i)]
\item\label{lemma:sizigie_i} replacing $x_n$ by $0$ in a first syzygy $S=(s_\alpha)_\alpha$ of $G_I$, we get a first syzygy of $G$;
\item\label{lemma:sizigie_ii} there is a free resolution of $H$ (resp. $I$) of type \eqref{eq:acm resolution}, that is obtained by lifting a minimal free resolution of $\mathfrak j$ (resp. $J$).
\end{enumerate}
\end{lemma}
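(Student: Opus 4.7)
For part (i), the argument is direct. Given a first syzygy $S = (s_\alpha)_\alpha$ of $G_I$, we have $\sum_\alpha s_\alpha(f_\alpha + g_\alpha) = 0$ in $\Kox$. By Theorem \ref{FM}, each $g_\alpha$ lies in $(x_n)\Kox$, so evaluating the identity at $x_n = 0$ yields $\sum_\alpha s_\alpha(x_0,\ldots,x_{n-1},0)\, f_\alpha = 0$ in $\Kx$, showing that $(s_\alpha\vert_{x_n = 0})_\alpha$ is a first syzygy of $G$.

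For part (ii), my plan is to apply the Schreyer-type lifting of syzygies along a Gr\"obner basis to both $G$ and $G_I$. Since $B_{\mathfrak j}\subset K[x_0,x_1]$ and $\mathfrak j$ is Cohen-Macaulay of codimension two in $\Kx$, the Hilbert--Burch theorem provides a minimal free resolution of $\mathfrak j$ of the shape \eqref{eq:acm resolution}, and the same is true for $J = \mathfrak j\, \Kox$ with identical Betti numbers, as $\Kx\hookrightarrow\Kox$ is flat and $B_J = B_{\mathfrak j}$ admits the same combinatorial first syzygies. I then lift each minimal first syzygy of $B_{\mathfrak j}$ (resp. $B_J$) to a first syzygy of $G$ (resp. $G_I$) by the standard Schreyer-type construction, citing \cite{FlR} for the existence and coherence of such a lift, as already indicated before the statement.

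The final step is to show that the lifted syzygies generate the full first syzygy module of $G$ (resp. $G_I$) without introducing new second syzygies. The first claim is standard for lifts of a Gr\"obner basis of the syzygy module. The second follows because any nontrivial second syzygy of the lifted first syzygies would, by the same leading-term specialization, yield a second syzygy of $B_{\mathfrak j}$ (resp. $B_J$), contradicting the length-two resolution of the monomial ideal from Hilbert--Burch. This gives the desired resolutions of $H$ and $I$ of type \eqref{eq:acm resolution}; moreover, part (i) provides the expected consistency that the resolution of $I$ specializes under $x_n\mapsto 0$ to the resolution of $H$.

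The main obstacle I anticipate is making the Schreyer-type lift and the no-new-second-syzygies argument precise without rederiving the combinatorics of the Schreyer frame. I would rely on \cite{FlR} and on \cite[Lemma 3.2]{Co} for the aCM case, both already cited in the paper, rather than redoing this technical piece in full detail.
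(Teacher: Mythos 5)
Your proposal is correct and follows essentially the same route as the paper: part (i) by observing that the $g_\alpha$ and the terms of $s_\alpha$ involving $x_n$ all vanish under $x_n\mapsto 0$, and part (ii) by combining the Hilbert--Burch resolution of $\mathfrak j$ (resp.\ $J$) with the standard Gr\"obner-basis lifting of the rows of $M_{\mathfrak j}$ to first syzygies of $G$ (resp.\ $G_I$), the case of $I$ being legitimate because $I$ is again aCM of codimension two. The only point where you diverge is the last step: instead of your ``leading-term specialization of a putative second syzygy'' (which, as stated, needs care since leading terms in a relation among the lifted syzygies may cancel), the paper deduces the independence of the rows of $M_H$ directly from exactness of the complex at $\Kx^{a+1}$ --- the rows generate the rank-$a$ syzygy module, so the kernel of $\psi_2$ is a torsion submodule of a free module, hence zero --- which is the cleaner way to close the argument.
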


\begin{proof}
(i) Recall that the head terms of the polynomials of $G$ are the same as the head terms of the polynomials of $G_I$ (see Theorem \ref{FM}) and the polynomials $g_\alpha$ are all divisible by $x_n$. If, for every $\alpha$, we distinguish the part of $s_\alpha$ divisible by $x_n$ setting $s_\alpha=s'_\alpha+x_ns''_\alpha$, where $x_n$ does not occur in $s'_\alpha$, we have
$\sum_\alpha (s'_\alpha+x_ns''_\alpha)(f_\alpha+g_\alpha) = 0$, hence
$\sum_\alpha (s'_\alpha f_\alpha) + \sum_\alpha (x_n s''_\alpha f_\alpha) + \sum_\alpha (s'_\alpha+x_ns''_\alpha)g_\alpha = 0$,
and $x_n$ does not occur in the first sum, but the second and the third sums are divisible by $x_n$. Thus, we have $\sum_\alpha (s'_\alpha f_\alpha)=0$.

(ii) We show that a free resolution of type \eqref{eq:acm resolution} can be constructed starting from the reduced Gr\"obner basis of $H$. We can do the same 
for $I$, because also the ideal $I$ is the saturated defining ideal of a $2$-codimensional aCM closed subscheme (in $\mathbb P^n_A$, in this case),
by the definition of $x_n$-lifting. 

Being $\mathfrak j$ an  aCM ideal of codimension two, by the Hilbert-Burch Theorem we have the following minimal free resolution, which is constructed starting from $B_{\mathfrak j}$:
\begin{equation}\label{eq:jresolution}
0 \longrightarrow \Kx^{a} \buildrel{\psi'_2}\over\longrightarrow \Kx^{a+1} \buildrel{\psi'_1}\over\longrightarrow \Kx \longrightarrow \Kx/\mathfrak j 
\longrightarrow 0
\end{equation} 
where the rows of the matrix $M_{\mathfrak j}$ of $\psi'_2$ form a minimal set of generators of the first syzygies of $B_{\mathfrak j}$. 
By the properties of Gr\"obner bases, we obtain a set of generators of the first syzygies of $G$ by lifting the rows of $M_{\mathfrak j}$ and thus 
a new matrix $M_H$. By $M_H$ we can construct a complex of type \eqref{eq:acm resolution} that is also a free resolution because it is exact in $\Kx^{a+1}$ by construction, and therefore the rows of $M_H$ are independent.
\end{proof}

\begin{remark}\label{rem:matrici}
Consider the matrix $M$ obtained by the matrix $M_H$ of $\psi_2$ in \eqref{eq:acm resolution} adding to each entry of $M_H$ a linear combination of the terms of the appropriate degree that are divisible by $x_n$. By the Hilbert-Burch Theorem   the $a\times a$ minors of the matrix $M$ generate an  aCM ideal of codimension $2$.  Furthermore, $I$ is a $x_n$-lifting of $H$. Thus, by Lemma \ref{lemma:sizigie}, all the $x_n$-liftings of $H$ are of this type. 
\end{remark}

The points of a Hilbert scheme corresponding to aCM schemes form an open subset   \cite[Th\'{e}or\`{e}me (12.2.1)(vii)]{Gro}, which is also smooth if the aCM schemes are of codimension two \cite[for $n=2$]{Fo},\cite[Theorem 2(i), for $n\geq 3$]{Elli}.   Hence, there is only one irreducible component $Y$ of $\mathcal{H}ilb^{n-1}_{p(t)}$ containing the point $y_0$ corresponding to $H$ and only one irreducible component   $\overline Y$   of $\mathcal{H}ilb^n_{\overline p(t)}$ containing the point $\overline y_0$ corresponding to $\overline H:=H\Kox$. 
Observe that all the $x_n$-liftings of $H$ belong to $\overline Y$, by Corollary \ref{cor2}(i).

\begin{theorem}\label{th:spazio affine}
Let $H\subset \Kx$ be the saturated homogeneous ideal defining an aCM scheme of codimension two. 
The scheme $\mathrm L_H$ is isomorphic to an affine space on the infinite field $K$.
\end{theorem}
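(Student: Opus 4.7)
My strategy is to apply Corollary \ref{cor2}(iii), which reduces the theorem to showing that the point $\overline y_0$ (corresponding to $\overline H = HK[\mathbf x, x_n]$) is a smooth point of $\mathrm L_H$. The main tool is the Hilbert--Burch description of $x_n$-liftings from Remark \ref{rem:matrici}, which realizes $\mathrm L_H$ as the image of an explicit affine-space parameterization by Hilbert--Burch matrices.

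I would fix the Hilbert--Burch matrix $M_H$ of $H$ furnished by Lemma \ref{lemma:sizigie}(ii), and introduce an affine $K$-scheme $V \cong \mathbb A^N_K$ parameterizing matrices $N$ of the same shape as $M_H$ whose $(i,j)$-entry is a homogeneous polynomial of the same degree as the corresponding entry of $M_H$ and is divisible by $x_n$. Over any Noetherian $K$-algebra $A$, the assignment sending $N \in V(A)$ to the ideal $I_N$ of maximal minors of $M_H + N$ defines a natural transformation $V \to \underline{\mathrm L}_H$: indeed $I_N$ reduces to $H$ modulo $x_n$ (since $N \equiv 0 \pmod{x_n}$), and by Lemma \ref{aCM} applied fiberwise together with constancy of the Hilbert polynomial, $x_n$ is a non-zero-divisor modulo $I_N$. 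The corresponding scheme morphism $\Phi\colon V \to \mathrm L_H$ is surjective on $K$-points by Remark \ref{rem:matrici}.

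The final step is to show that $\Phi$ is smooth at the origin $0 \in V$ (which maps to $\overline y_0$), so that smoothness transfers from $V$ to $\mathrm L_H$ at $\overline y_0$. By the uniqueness part of the Hilbert--Burch theorem, the fiber of $\Phi$ over any closed point of $\mathrm L_H$ is a single orbit under a natural action on $V$ of a linear algebraic subgroup $G \subseteq GL_a \times GL_{a+1}$ consisting of row/column operations that preserve $M_H$ modulo $x_n$; combined with the embedding of $\mathrm L_H$ into the smooth aCM codimension-two component $\overline Y$ of $\mathcal{H}ilb^n_{\overline p(t)}$ (Proposition \ref{cor1} and Lemma \ref{aCM}) and with torus equivariance (Proposition \ref{prop:smooth}), this reduces the smoothness claim to verifying that $d\Phi_0$ is surjective with kernel of the expected dimension given by the tangent space to the $G$-orbit of $0$.

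The main obstacle will be formalizing this quotient/orbit picture rigorously and performing the requisite tangent-space comparison: one must identify exactly which first-order perturbations of $M_H$ by matrices divisible by $x_n$ yield trivial first-order deformations of the ideal, and then match this kernel against the codimension of $\mathrm L_H$ inside $V$. The weight structure of $\mathfrak h_0$ supplied by Proposition \ref{prop:smooth}, together with the fact that the aCM codimension-two stratum of the Hilbert scheme is smooth and contains the entire $\mathrm L_H$, should make the dimension count transparent and yield the desired smoothness at $\overline y_0$, completing the proof via Corollary \ref{cor2}(iii).
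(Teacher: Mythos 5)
Your opening reduction is exactly the paper's: both arguments hinge on showing that $\mathrm L_H$ is smooth at $\overline y_0$ and then invoking Corollary \ref{cor2}(iii). From there, however, you diverge. The paper never uses the Hilbert--Burch parameterization for this theorem: it realizes $\mathrm L_H$ as the fibre $\pi^{-1}(y_0)$ of a projection $\pi\colon \mathcal V\to\mathcal U$ between explicit loci of the two Hilbert schemes $\mathcal{H}ilb^n_{\overline p(t)}$ and $\mathcal{H}ilb^{n-1}_{p(t)}$, gets the lower bound $\dim \mathrm L_H\geq \overline m-m$ from the fibre-dimension theorem, and gets the matching upper bound on $\dim Z_{\overline y_0}(\mathrm L_H)$ by splitting the linear parts of $\mathfrak p\cup\mathfrak h_{\mathcal U}$ as a direct sum $W_1\oplus W_2$ and using Ellingsrud's smoothness of $\overline Y$ at $\overline y_0$. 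Your route through the morphism $\Phi\colon V\to\mathrm L_H$ is genuinely different and, if completed, would have the added benefit of an explicit determinantal presentation of $\mathrm L_H$.

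The gap is that the one step carrying all the content --- surjectivity of $d\Phi_0$ onto $Z_{\overline y_0}(\mathrm L_H)$ --- is only announced, not proved, and the reductions you offer do not deliver it. Knowing that the fibres of $\Phi$ are orbits of a group of row/column operations does not make $\Phi$ smooth: $\mathrm L_H$ is a priori non-reduced (Example \ref{es:struttNonRid}), and a surjection from an affine space with equidimensional orbit fibres can perfectly well land in a non-reduced or singular scheme; smoothness of $\Phi$ at $0$ is \emph{equivalent} to the tangent-space statement you are trying to avoid. To prove surjectivity of $d\Phi_0$ you must show that every first-order deformation of $\overline H$ lying in $\mathrm L_H$ is induced by a perturbation $M_H+\epsilon N$, and then that $N$ can be normalized, by an infinitesimal row/column operation lifting a stabilizer of $M_H$ modulo $x_n$, to be divisible by $x_n$. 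The first of these facts is exactly where Ellingsrud's theorem (every first-order deformation of a codimension-two aCM ideal is determinantal, equivalently $\overline Y$ is smooth at $\overline y_0$) must enter; you cite the smooth component $\overline Y$ but never connect it to $d\Phi_0$. A secondary gap: for $\Phi$ to be a morphism to $\mathrm L_H$ you need $I_N\in\underline{\mathrm L}_H(A)$ for \emph{every} Noetherian $A$, including non-reduced ones such as $K[\epsilon]$, where ``fibrewise Lemma \ref{aCM} plus constancy of the Hilbert polynomial'' is not available; one must instead argue via exactness of the Hilbert--Burch complex over $A[\mathbf x,x_n]$ (reduction modulo $x_n$ plus the acyclicity criterion). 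Until these two points are supplied, the proposal is a plausible plan rather than a proof.
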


\begin{proof}
We show that $\mathrm L_H$ is smooth at  $\overline y_0$ and then  apply Corollary \ref{cor2} (iii). To this aim we determine  the dimension of the Zariski tangent space $Z_{\overline y_0}(\mathrm L_H)$  to $\mathrm L_H$ at   $\overline y_0$. This dimension does not change if we replace  $K$ by its algebraic closure. Therefore we may assume that $K$ is algebraically closed.

Let $\mathcal U$ be the set of points in $Y$ corresponding to the ideals $\mathfrak d\subset K[\mathbf x]$ such that $\cN(\mathfrak j)$ is a basis of the $K$-vector space $K[\mathbf x]/\mathfrak d$. We can see that $\mathcal U$ is a locally closed subscheme of $Y$. Indeed, it is containd as an open subset in the locus of points of $Y$ with the same Hilbert function as $\Kx/\mathfrak j$, which  is locally closed in $Y$ (see \cite{Mall00}). Furthermore, $\mathcal U$ is an affine scheme $\mathcal U=\mathrm{Spec}(R)$, where $R=K[E]/\mathfrak p$ is a finitely generated $K$-algebra (see \cite{CMR}: observe that $B_{\mathfrak j}$ generates a quasi-stable ideal). Note that the point $y_0$ belongs to $\mathcal U$: we can assume that it is given by  the maximal ideal $(E)R$, hence  $\mathfrak p\subset (E)$.

In a similar way we define   the locally closed subscheme  $\mathcal V  $   of  $\overline Y$ whose closed points correspond to the ideals $D\subset K[\mathbf x, x_n]$ such that   $\cN(J)$ is a basis of the $K$-vector space $K[\mathbf x,x_n]/D$.  
Applying \cite[Proposition 1.1]{FlR} to $J$ (note that $n\geq 3$), we see that    $\mathcal V$ is in fact open in $\overline Y$. 
Moreover,  we observe that for every ideal $D$ corresponding to a point of $\mathcal  V$, $x_n$ is not a zero-divisor on $\Kox/D$. Therefore, the image of $D$ by the canonical morphism $ \phi \colon \Kox \rightarrow  K[\mathbf x]$, which transforms $x_n$ in $0$, defines a point of $\mathcal U$.  Therefore, $\phi$ determines a  projection $\pi \colon \mathcal  V \rightarrow \mathcal  U$.

On the other hand, if $\mathfrak d$ is a point of $\mathcal U$, then $D=\mathfrak d \otimes_K K[x_n]$ defines a point of  $\mathcal V$, hence a section  $\sigma \colon \mathcal U \rightarrow \mathcal  V$ of $\pi$, being $\pi \circ \sigma=Id_U$. In particular, $\sigma(y_0)=\overline y_0$.

More generally,   all the $x_n$-liftings of $\mathfrak d$ belong to $\mathcal V$. Hence, $\mathcal V$  is  the scheme $\mathrm L _{\mathcal U}$ of  the $x_n$-liftings of  $\mathcal U$ (see Remark \ref{families}). 


By construction, for every closed point $y$ of $\mathcal  U$, the fibre $\pi^{-1}(y)$ is the scheme of the $x_n$-liftings of $y$. We are interested in the fibre $\pi^{-1}(y_0)$, which is $\mathrm L_H$.
If $m:=dim(\mathcal U)$ and $\overline m:=dim(\mathcal V)$, by general results about the dimension of the fibres of a morphism (e.g.~\cite[Chapter II, Ex. 3.22]{H77}), the dimension of $\mathrm L_H$ is $\geq \overline m-m$, thus the dimension of the Zariski tangent space $Z_{\overline y_0}(\mathrm L_H)$ is $\geq \dim(\mathrm L_H)\geq \overline m-m$.
Now, we prove that $\mathrm L_H$ and  $Z_{\overline y_0}(\mathrm L _H)$ have the same dimension $\overline m -m$, and then $\mathrm L_H$ is smooth at $\overline y_0$.

By applying the construction of the $x_n$-liftings to $\mathcal  U$ we described in Sections \ref{sec:lifting functor} and \ref{sec:representability}, we obtain $\mathcal  V$ as an affine scheme $\mathcal  V=\mathrm{Spec}(S)$ where $S=R[C]/\mathfrak h_{\mathcal U}=K[E,C]/(\mathfrak p \cup \mathfrak h_{\mathcal  U})$, and $\mathfrak h_{\mathcal  U}$ is the analogous of the ideal of Definition \ref{defh0}.  By construction, $\mathfrak h_{\mathcal  U}$ is contained in  $(C)K[E,C]$. Note that, specializing the variables $E$ to $0$ in $\mathfrak h_U$, we obtain precisely the ideal $\mathfrak h_0 \subset K[C]$ defining the scheme $\mathrm L_H$. Therefore, the linear part $l$ of every element in $\mathfrak h_U$  coincides with the linear part of the corresponding element of  $\mathfrak h_0 $, since $l\in K[C]$.

%


The Zariski tangent space $Z_{\overline y_0}(\mathcal  V)$ to $\mathcal  V$ at $\overline y_0$ has dimension $\overline m$, because $\overline y_0$ is a smooth point of $\overline Y$ and   $\mathcal  V$ is open in $\overline Y$. It is defined by the vanishing of the elements in the $K$-vector space $W$ of the linear parts of the elements in $\mathfrak p \cup \mathfrak h_{\mathcal  U}$. Observe that $W=W_1+W_2$, where $W_1$ contains the linear parts of elements of $\mathfrak p$ and $W_2$ contains the linear parts of elements of $\mathfrak h_{\mathcal U}$, and the sum is direct, being $W_1 \subset K[E]_1$ and $W_2 \subset K[C]_1$.

The vector space $W_2$ is also given by the linear parts of elements in $\mathfrak h_0$, hence its vanishing also defines the Zariski tangent space $Z_{\overline  y_0}(\mathrm L_H)$, while the vanishing of the vector space $W_1$ also defines the Zariski tangent space $Z_{ y_0}(\mathcal U)$.

Therefore, $\dim(Z_{\overline  y_0}(\mathrm L_H))=\vert C\vert- dim(W_2)=\vert C\vert +\vert E\vert- dim(W )-(\vert E\vert- dim(W_1)) =\dim(Z_{\overline  y_0}(\mathcal V))-\dim(Z_{  y_0}(\mathcal U))\leq \overline m -m$.


As a consequence, we have $ \overline m -m\geq  \dim(Z_{\overline y_0}(\mathrm L_H)) \geq \dim(\mathrm L_H) \geq \overline m -m$,  hence $\dim(\mathrm L_H) =\dim(Z_{\overline y_0}(\mathrm L_H))=\overline m-m$, so that  $\mathrm{L}_H$ is smooth at $\overline y_0$. By Corollary  \ref{cor2}, $\mathrm L_H$ is isomorphic to an affine space of dimension $\overline m -m$ over $K$.
\end{proof}

\begin{remark}
A direct consequence of the proof of Theorem \ref{th:spazio affine} is that $\mathcal U$  is smooth at $y_0$, being $y_0$ any of its points. We observe that all the points of $\mathcal U$ share the same Hilbert function, and $\mathcal U$ is an open subset of a Hilbert function stratum of $Y$.    In particular, if $n=2$,   $Y$ is the only component of the Hilbert scheme and all  its points correspond to aCM schemes of codimension two in $\mathbb P^2$.   Hence, for an infinite field of any characteristic, we find  a result analogous to that described in \cite{Go88} for a field of  characteristic 0 about the smoothness of the  Hilbert function strata. 
\end{remark}

{
\begin{remark}
Let $\mathfrak j\subset K[x_0,x_1,x_2]$ be any aCM ideal of codimension $2$     with $B_\mathfrak j\subset K[x_0,x_1]$. 
The construction of the Gr\"obner stratum  $\mathrm{St}_\mathfrak j^{\prec_{lex}}$ w.r.t. the  lex term order $\prec_{lex}$  has been studied in \cite{CV}, proving that $\mathrm{St}_{\mathfrak j}^{\prec_{lex}}$ is an affine space and providing an explicit parameterisation of the matrices of the first syzygies of their Pommaret basis. 

In \cite{Co}  similar results are obtained for the Gr\"obner stratum  $\mathrm{St}_{\mathfrak j}^{\prec}$ w.r.t. the graded  reverse lexicographic  term order $\prec$ assuming that $\mathfrak j\subset K[x_0,x_1]$ is a lex-segment ideal.

In several respects, the framework and the methodologies  of this section  recall those  in \cite{CV,Co}. In  fact,  the objects of study are families of ideals whose initial ideals  are generated by $B_{\mathfrak j}$,  it is proved that  such families can be parameterised by    affine spaces and in the proofs  Hilbert-Burch resolutions and Pommaret bases are used (note that we actually  use Pommaret bases in the proof of Theorem \ref{th:spazio affine}, when we refer to \cite{CMR}).

On the other hand, there are  no common  results among  the present paper and the two quoted ones. The main reason is that the families of ideals considered in \cite{CV,Co} define subschemes in  $\mathbb P^2$, while the  liftings of  an aCM ideal $H$  of codimension $2$ corresponds  to subschemes in $\mathbb P^n$ with $n\geq 3$. 

It would be very interesting to find an explicit  parameterisation of $\mathrm L_H$  by Hilbert-Burch resolutions, similar to those  of \cite{CV,Co}.  Anyway, the results of  \cite{CV,Co}   cannot be easily generalized.  Indeed, we cannot generalize the resolution of  \cite{CV},  since not all the liftings  have initial ideal  w.r.t. $\prec_{lex}$ generated by $B_{\mathfrak j}$.  Let us consider, for instance,  the aCM ideals  $\mathfrak j=H=(x_0^2,x_0x_1,x_1^2)\subset K[x_0, x_1, x_2]$,  $J=\mathfrak j K[x_0, x_1, x_2,x_3]$ and $I=(x_0^2-x_1x_3,x_0x_1+x_3^2,x_1^2+x_0x_3)\subset K[x_0,x_1,x_2,x_3]$. The ideal $I$ is a  $x_3$-lifting of $H$, but does not belong to $ \mathrm{St}_J^{\prec_{lex}}$. Indeed the initial ideal of $I$ w.r.t.~lex is the  ideal $J'=(x_0^2,x_0x_1,x_0x_2,x_1^3)$, which  is not an aCM ideal. 
Note that the following matrix of syzygies of $I$ 
\[
\begin{pmatrix}
x_1 & -x_0 &  x_3\\
x_3 & x_1 & -x_0
\end{pmatrix}
\] (and also all the possible others) has no null entries, hence  an analogous of \cite[Lemma 3.5]{CV} is not achievable.
Indeed, the embedding of   $\mathrm L_H$   in a Gr\"obner stratum $\mathrm{St}_J^{\sigma}$ is possible only if  $\sigma$ is a degreverse  term order.  For  this reason  it could be more suitable to consider the framework of \cite{Co}, but there the main result  is proved under the limiting condition that $B_\mathfrak j$  is a lex-segment in $K[x_0,x_1]$. 
\end{remark}
}

\begin{theorem}\label{thm:liftACMdue}
Every saturated homogeneous ideal $H$ defining a $2$-codimensional aCM scheme over an infinite field $K$ has a radical $x_n$-lifting. In particular, every saturated homogeneous ideal defining a $0$-dimensional scheme in $\mathbb P^2$ has a radical $x_n$-lifting in $\mathbb P^3$. 
\end{theorem}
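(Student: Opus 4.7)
The plan is to combine Theorem \ref{th:spazio affine} with Corollary \ref{cor4}. By Theorem \ref{th:spazio affine}, $\mathrm{L}_H$ is isomorphic to an affine space over the infinite field $K$; thus it is irreducible and, since $K$ is infinite, its $K$-rational points are Zariski-dense. By Corollary \ref{cor4}, the locus $U\subseteq \mathrm{L}_H$ of radical liftings is open. Consequently, the theorem reduces to showing $U\neq \emptyset$: once this is known, $U\cap \mathrm{L}_H(K)$ is a non-empty open subset of the affine space $\mathrm{L}_H$, hence contains $K$-rational points, each corresponding to a radical $x_n$-lifting of $H$ defined over $K$.

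To produce an element of $U$, I would exhibit a radical lifting constructively via the Hilbert--Burch description from Lemma \ref{lemma:sizigie} and Remark \ref{rem:matrici}. Every $x_n$-lifting of $H$ has the form $I_M$, where $M=M_H+x_n N$, $M_H$ is the Hilbert--Burch matrix of $H$, and $N$ is a matrix of homogeneous polynomials in $\Kox$ of appropriate degrees. By Lemma \ref{aCM}, $I_M$ is aCM of codimension two for every choice of $N$; in particular it satisfies Serre's condition $S_2$, so it is radical if and only if it is generically reduced. Hence it suffices to choose $N$ so that $I_M$ is generically reduced.

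For a sufficiently general $N$ I would argue by a Bertini-type analysis on the dense affine chart $\{x_n\neq 0\}\subset \mathbb{P}^n$: on this chart, as $N$ varies, the dehomogenization of $M$ traces out a rich enough family of matrices for the generic member to define a generically smooth determinantal scheme, thanks to standard smoothness results for generic determinantal schemes of codimension two. Since $V(I_M)\cap \{x_n=0\}\cong V(H)$ has codimension one in $V(I_M)$, any non-reducedness on the hyperplane $\{x_n=0\}$ does not violate the generic reducedness required by $S_2$, and the desired radicality follows. The main obstacle is to guarantee that the family $\{M_H+x_n N\}$ is sufficiently flexible on $\{x_n\neq 0\}$ for the Bertini-type argument to apply; this can be arranged by first invoking the group action of Remark \ref{piugenerale}, which isomorphically identifies $\mathrm{L}_H$ with $\mathrm{L}_{\theta(H)}$ for $\theta\in GL_n$, to place the Hilbert--Burch matrix of $H$ in sufficiently general position before carrying out the perturbation.
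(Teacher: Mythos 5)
Your opening reduction is sound and is a legitimate reorganization of the paper's ingredients: since $\mathrm{L}_H$ is an affine space over the infinite field $K$ (Theorem \ref{th:spazio affine}) and the radical locus is open (Corollary \ref{cor4}), it indeed suffices to show that this locus is non-empty, and a non-empty open subset of $\mathbb A^d_K$ has a $K$-rational point. The gap is in the existence step, which is the actual content of the theorem. Your Bertini-type argument is applied to the constrained family $M_H+x_nN$, whose reduction modulo $x_n$ is frozen equal to $M_H$; the ``standard smoothness results for generic determinantal schemes'' concern matrices of generic forms and do not apply to such a family without a concrete transversality argument, and generic-translate (Kleiman-type) transversality only yields the smoothness conclusion in characteristic $0$, whereas the theorem is asserted for every infinite field. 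Moreover, your proposed remedy --- the $GL_n$-action of Remark \ref{piugenerale} --- does not help: it only changes coordinates in $\mathbf x$ and replaces $M_H$ by $M_{\theta(H)}$, leaving the perturbation space (multiples of $x_n$) exactly as constrained as before. So the step ``choose $N$ general enough that $I_M$ is generically reduced'' is asserted, not proved.

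The paper produces the required radical lifting by an entirely different, characteristic-free mechanism that your proposal does not touch: it uses the classical fact (distraction) that the monomial initial ideal $\mathfrak j=\In_\prec(H)$ admits a radical $x_n$-lifting $N$, and then connects $H$ to $\mathfrak j$ by a Gr\"obner deformation $\{H(t)\}$ with $H(0)=\mathfrak j$ and $H(t)\cong H$ for $t\neq 0$. Setting $M(t)=M_{H(t)}+M_N-M_{\mathfrak j}$ gives, via Remark \ref{rem:matrici} and Lemma \ref{lemma:sizigie}, a flat family $\{I(t)\}$ of $x_n$-liftings over $\mathbb A^1$ whose fibre at $t=0$ is the radical ideal $N$; openness of the reduced locus in the Hilbert scheme then yields some $\bar t\neq 0$ (here is where $K$ infinite is used) with $I(\bar t)$ radical, and pulling back by the coordinate change gives a radical lifting of $H$ itself. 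If you want to salvage your outline, you need to replace the Bertini step by an argument of this kind --- i.e., exhibit one explicit point of the radical locus rather than appeal to genericity --- or else restrict to characteristic $0$ and carry out the transversality computation for the specific linear system of perturbations $x_nN$ honestly.
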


\begin{proof}
For every $x^\alpha\in B_{\mathfrak j}$, let $f_\alpha:=x^\alpha + \sum_\gamma c_{\alpha,\gamma} x^\gamma$ be the polynomial of the reduced 
Gr\"obner basis $G_H$ with head term $x^\alpha$. Then, let $\omega=[\omega_0,\ldots,\omega_{n-1}]$ be a weight vector such that the polynomials
$f_\alpha(t):=x^\alpha + \sum_\gamma c_{\alpha,\gamma} t^{\omega\cdot(\alpha-\gamma)}x^\gamma$ generate a flat family $\{H(t)\}_t$ of Gr\"obner deformations from $H$ to $\mathfrak j$ (e.g.~\cite{BM91}).   Recall that the ideal of this family corresponding to each  $\bar t\in K^*$ 
is isomorphic to $H$ since $\phi_t(H)=H(t)$ by  the automorphism $\phi_t$ of $\mathbb P^{n-1}$ given by $x_0\mapsto t^{-\omega_0}x_0, \dots, x_{n-1}\mapsto t^{-\omega_{n-1}}x_{n-1}$. Moreover, $H(0)=H$, being all the exponents $\omega\cdot(\alpha-\gamma)$ positive.

For $H(t)$ there is a resolution of type $\eqref{eq:acm resolution}$ we obtain as in Lemma \ref{lemma:sizigie}\eqref{lemma:sizigie_ii}. Let $M_{H(t)}$ be the matrix of the corresponding homomorphism $\psi_2$. Observe that each addend of the entries of $M_{H(t)}$ is divisible by $t$, except for the addends that form the syzygies of $\mathfrak j$, according to Lemma \ref{lemma:sizigie}\eqref{lemma:sizigie_ii}. Let $N$ be a radical lifting of $\mathfrak j$, which exists by \cite{H66} or \cite[Th. 2.2]{GGR} or \cite[Th. 8]{Ro} (see also \cite[Theorem 6.2.12]{KR2}), and let $M_N$ be the matrix corresponding to the homomorphism $\psi_2$ in a free resolution of $N$ of type \eqref{eq:acm resolution}. Each addend of the entries of $M_{N}$ is divisible by $x_n$, except for the addends appearing in the syzygies of $\mathfrak j$, according to Lemma \ref{lemma:sizigie}\eqref{lemma:sizigie_ii}.
Let $M_{\mathfrak j}$ be the analogous matrix for $\mathfrak j$.

Now, consider the matrix $M(t):=M_{H(t)}+M_N-M_{\mathfrak j}$ and let $\{I(t)\}_t$ be the family of the ideals generated by the maximal minors
of $M(t)$. Hence, $M(t)$ is the matrix of the homomorphism $\psi_2(t)$ of a resolution
$$0\longrightarrow (\Kx[t])^{a} \buildrel{\psi_2(t)}\over\longrightarrow (\Kx[t])^{a+1} \buildrel{\psi_1(t)}\over\longrightarrow \Kx[t] \longrightarrow \Kx[t]/I(t)\longrightarrow 0.$$
By construction (see also Remark \ref{rem:matrici}), $\{I(t)\}_t$ is a flat family of $x_n$-liftings of $\{H(t)\}_t$  parameterised by an affine line $\mathbb A^1$. Hence, it is embedded in a Hilbert scheme. 

By the already cited \cite[Th\'{e}or\`{e}me (12.2.1), (viii)]{Gro}, the locus of points of a Hilbert scheme corresponding to reduced schemes is open. Hence, there is an
open subset $U$ of $\mathbb A^1$ corresponding to reduced schemes. Moreover, $U$ is not empty because it contains $t=0$. Due to the fact that
the field $K$ is infinite, there is at least another value $\bar t\not=0$ in $U$, so that $I(\bar t)$ is reduced. Then, $H$ has the radical
$x_n$-lifting obtained from  $I(\bar t)$ by the automorphism $\phi_{\overline t}^{-1}$ of $\mathbb P^n$ that is obtained extending $\phi^{-1}_{\overline t}$ putting $\phi^{-1}_{\overline t}(x_n)=x_n$.
\end{proof}


\section{Examples}
\bcR The computations about the following examples have been performed using the softwares CoCoA  \cite{CoCoA-5} and Maple16 \cite{Maple}. The Gr\"obner bases of  the ideals $\mathfrak h_0$ in $K[C]$ that define the schemes of $x_n$-liftings are those  w.r.t. the lexicographic term order  with the variables $C$ ordered according to the weights defined in  Proposition \ref{prop:smooth}. This term order naturally leads to the complete  elimination of eliminable variables  
(see Remark \ref{eliminazione}).\ecr


\begin{example}\label{ex:LuoYilmaz}
We study the $x_3$-liftings of the lex-segment ideal  
\[
J:=(x_0^2, x_0x_1, x_0x_2, x_1^2)\subseteq K[x_0,x_1,x_2],
\]
finding a scheme having two distinct irreducible components, one of which is made of non-radical liftings.
This is the same example as \cite[Example 3.3]{LY}, after a change of coordinates allowing us to consider a monomial ideal. 
In \cite{LY}, the authors explicitly compute a set of equations defining $\mathrm L_J$ with a different technique: they impose conditions on the syzygies
of a set of polynomials generating a $x_3$-lifting of $J$. In general, the algorithm they use leads to a different set of conditions from the ones 
we compute by Theorem \ref{FM} and Proposition \ref{comeLR}. However, on this example, the equations defining the scheme 
$\mathrm L_J$ obtained in \cite{LY} are the same as those that we obtain with our strategy. Here we just briefly expose the equations and we 
focus on the structure of $\mathrm L_J$. 

Starting from the set $\mathcal G=\lbrace f_1+g_1,f_2+g_2,f_3+g_3,f_4+g_4\rbrace$ with
\[
\begin{array}{lcl}
f_1=x_0^2,& \quad & g_1=C_1x_0x_3+C_2x_1x_3+C_3x_2x_3+C_4x_3^2,\\
f_2=x_0x_1,& \quad & g_2=C_5x_0x_3+C_6x_1x_3+C_7x_2x_3+C_8x_3^2,\\
f_3=x_0x_2,& \quad & g_3=C_9x_0x_3+C_{10}x_1x_3+C_{11}x_2x_3+C_{12}x_3^2,\\
f_4=x_1^2,& \quad & g_4=C_{13}x_0x_3+C_{14}x_1x_3+C_{15}x_2x_3+C_{16}x_3^2,\\
\end{array}
\]
we impose that $\mathcal G$ is a Gr\"obner basis in $K[x_0,x_1,x_2,x_3]$ with initial ideal $J$ w.r.t.~the degreverse term order that coincides with the deglex term order on 
$K[x_0,x_1,x_2]$. In this way, we obtain the ideal 
$\mathfrak h_0=(C_{{6}}-C_{{11}},C_{{2}},C_{{7}},C_{{3}},-C_{{10}}C_{{5}}-C_{{11}}C_{
{9}}-C_{{12}},-C_{{14}}C_{{5}}+{C_{{5}}}^{2}$ 
$+C_{{13}}C_{{11}}-C_{{13}}C_{{1}}-C_{{15}}C_{{9}}-C_{{16}},-C_{{5}}C_{{11}}-C_{{8}},-{C_{{11}}}^
{2}+C_{{1}}C_{{11}}+C_{{4}}, -C_{{10}}C_{{13}},-2\,C_{{11}}C_{{10}}+$ $C_{{1}}C_{{10}},2\,C_{{10}}C_{5}-C_{10}C_{14},-C_{10}C_{15})$.

After eliminating the 8 variables $ C_{2},C_{3},C_{4},C_{6},C_{7},$ $C_{8},C_{12},C_{16}$, we obtain an embedding of the scheme of liftings in $\mathbb A^8$ 
given by the ideal 
\begin{equation}\label{condFatt}
(C_{{10}})\cap (C_{{13}},C_{{15}}, C_{{1}}-2\,C_{{11}},C_{{14}}-2\,C_{{5}})
\end{equation}
Hence, $\mathrm L_J$ has two irreducible components:  $\mathrm L_1$, the hyperplane in $\mathbb A^8$  given  by  the ideal $(C_{10})$, 
and $\mathrm L_2$, the linear subspace of dimension $4$ in $\mathbb A^8$
given by the ideal $(C_{{13}},C_{{15}}, C_{{1}}-2\,C_{{11}},C_{{14}}-2\,C_{{5}})$. 
We will now explicitly compute the  locus of radical liftings of $\mathrm L_J$, which by Corollary \ref{cor4} is an open subset of $\mathrm L_J$, hence we compute the locus of radical liftings of both the components $\mathrm L_1$ and $\mathrm L_2$.

The first component $\mathrm L_1$  
parameterises the liftings that are generated by polynomials of
the following type:
\[
\begin{array}{cl}
f_1+g_1=&\left( x_0+C_{{11}}x_3 \right) \left( x_0+(C_{{1}}-C_{{11}})x_3 \right),\\
f_2+g_2=&\left( x_1+C_{{5}}x_3 \right)  \left( x_0+C_{{11}}x_3 \right), \\
f_3+g_3=&{x_1}^{2}+C_{{13}}x_0x_3+C_{{14}}x_1x_3+C_{{15}}x_2x_3+(C_{{14}}C_{{5}}-C_{{13}}C_{{11}}-{C_{{5}}}^{2}+C_{{13}}C_{{1}}+\\
        &+C_{{15}}C_{{9}})x_3^2,\\
f_4+g_4=&\left( x_2+C_{{9}}x_3 \right)  \left( x_0+C_{{11}}x_3 \right).
\end{array}
\]
By easy computations we can see that we obtain non reduced ideals only if either
$C_{{1}}-2C_{{11}}=0$ \ or \ $(C_{14}-2C_5)^2+4C_{13}(C_1-2C_{11})=C_{15}=0$.

The second irreducible component $\mathrm L_2$ of $\mathrm L_J$ parameterises the liftings generated by polynomials of the following type:
\[
\begin{array}{rcl}
f_1+g_1&=&\left( x_0+C_{{11}}x_3 \right) ^{2},\\
f_2+g_2&=& \left( 2\,x_1+C_{{14}}x_3 \right)  \left( x_0+C_{{11}}x_3 \right) , \\
f_3+g_3&=&\left( 2\,x_1+C_{{14}}x_3 \right) ^{2},\\
f_4+g_4&=&2\,x_0x_2+2\,C_{{9}}x_0x_3+2\,C_{{10}}x_1x_3+2\,C_{{11}}x_2x_3+(2\,C_{{11}}C_{{9}}+C_{{10}}C_{{14}})x_3^2.
\end{array}
\]
Each ideal of this type corresponds to a double structure over the line $x_0+C_{{11}}x_3=2\,x_1+C_{{14}}x_3=0$, hence the locus of radical liftings of $\mathrm L_2$ is empty. 
\end{example}


\begin{example}\label{esiso}
In this example, we apply the construction arising from Theorem \ref{FM} and Proposition \ref{comeLR} with the degrevlex and the deglex term orders to compute two different $K$-algebras that define the scheme $\mathrm L_H$ of an ideal $H$, thanks to Theorem \ref{funtrappr1}. Then, we find an explicit isomorphism between them.

Take the homogeneous ideal $H=(x_0^2,x_0x_1,x_1^4+x_1x_2^3)\subset K[x_0,x_1,x_2]$, with $x_0>x_1>x_2$.  
The reduced Gr\"obner basis of $H$ w.r.t.~the degrevlex term order is $G=\{f_1=x_0^2,f_2=x_0x_1, f_3=x_1^4+x_0x_2^3\}$. The reduced Gr\"obner basis of 
$H$ w.r.t.~the deglex term order is $G'=\{f_1'=x_0^2,f_2'=x_0x_1,f_3'=x_0x_2^3+x_1^4,f_4'=x_1^5\}$. 
We get $\mathcal G:=\{f_1+g_1,f_2+g_2,f_3+g_3\}$ with
$$\begin{array}{ll}
g_1=&C_{{1}}x_{{0}}x_{{3}}+C_{{2}}x_{{1}}x_{{3}}+C_{{3
}}x_{{2}}x_{{3}}+C_{{4}}x_{{3}}^{2},\\
g_2=&C_{{5}}x_{{0}}x_{{3}}+C_{{6}}x_{{1}}x_{{3}}+C_{{
7}}x_{{2}}x_{{3}}+C_{{8}}x_{{3}}^{2}, \\ 
g_3=&C_{{9}}x_{{1}}^{3}x_{{3}}+C_{{10}}x_{{1}}^{2}
x_{{2}}x_{{3}}+C_{{11}}x_{{0}}x_{{2}}^{2}x_{{3}}+C_{{12}}x_{{1}}x_{
{2}}^{2}x_{{3}}+C_{{13}}x_{{2}}^{3}x_{{3}}+C_{{14}}x_{{1}}^
{2}x_{{3}}^{2}+\\
&C_{{15}}x_{{0}}x_{{2}}x_{{3}}^{2}+C_{{16}}x_{{1}}x_
{{2}}x_{{3}}^{2}+C_{{17}}x_{{2}}^{2}x_{{3}}^{2}+C_{{18}}x_{{0}}x_{{3}}^{3}+
C_{{19}}x_{{1}}x_{{3}}^{3}+C_{{20}}x_{{2}}x_{{3}}^{3}+C_{{21}}x_{{3}}^{4
}
,\end{array}$$
Observe that, starting from $\mathcal G$, we construct the scheme $\mathrm L_H$ in an affine space of dimension 21. We will soon see that the scheme $\mathrm L_H$ is smaller than this ambient space. The set $\mathcal G$ is a Gr\"obner basis w.r.t.~the degrevlex term order, modulo the following ideal in $K[C]$:

$\mathfrak h_0=(C_{{2}},C_{{7}},C_{{13}}-C_{{1}}+C_{{6}},-C_{{6}}C_{{5}}+C_
{{8}},-C_{{6}}^{2}+C_{{1}}C_{{6}}-C_{{4}},
C_{{17}}-C_{{11}}C_{{1}}-C_{{12}}C_{{5}}+C_{{6}}C_{{11}}, 
C_{{6}}C_{{15}}+C_{{10}}C_{{5}}^{2}-C_{{16}}C_{{5}}+C_{{20}}-C_{{15}}C_{{1}},
-C_{{9}}{C_{{5}}}^{3}-C_{{19}}
C_{{5}}+
C_{{21}}+{C_{{5}}}^{4}+C_{{6}}C_{{18}}-C_{{18}}C_{{1}}+C_{{14}
}{C_{{5}}}^{2})$.

After the  elimination of the variables, we  obtain that  $\mathrm L_H$ is isomorphic to an affine space of dimension 12. 
Using this isomorphism, the polynomials of $\mathcal G$ become
$$\begin{array}{ll}
f_1+g_1=&\left( x_{{0}}+C_{{6}}x_{{3}} \right)  \left( C_{{5}}x_{{3}}+x_{{1}}
 \right)
,\\
f_2+g_2=&\left( x_{{0}}+C_{{6}}x_{{3}} \right)  \left( -C_{{6}}x_{{3}}+C_{{1}}
x_{{3}}+x_{{0}} \right) 
,\\
f_3+g_3=&x_{{1}}^{4}+x_{{0}}{x_{{2}}}^{3}+C_{{10}}x_{{1}}^{2}x_{{2}}x_{{3}}+ C_{{11}}x_{{0}}x_{{2}}^{2}x_{{3}}+C_{{12}}x_{{1}}x_{{2}}^{2}x_{{3}}+ 
C_{{15}}x_{{0}}x_{{2}}x_{{3}}^{2}+\\ 
&C_{{16}}x_{{1}}x_{{2}}x_{{3}}^{2}+C_{{9}}x_{{1}}^{3}x_{{3}}+C_{{14}}x_{{1}}^{2}x_{{3}}^{2}+
C_{{18}}x_{{0}}x_{{3}}^{3}+C_{{19}}x_{{1}}x_{{3}}^{3}+\left(C_{1} -C_{6} \right)
x_{{2}}^{3} x_{{3}}+\\
&\left( C_{{15}}C_{{1}}+C_{{16}}C_{{5}}-C_{{10}}{C_{{5}}}^{2}-C_{{6}}C_{{15}}
 \right) x_{{2}}x_{{3}}^{3}+ \\
&\left( C_{{12}}C_{{5}}+C_{{11}}C_{{1}}-C_{{6}}C_{{11}} \right)x_{{2}}^{2} x_{{3}}^{2}+\\
&\left( C_{{9}}C_{{5}}^{3}+C_{{19}}C_{{5}}-C_{{6}}C_{18}+C_{{18}}C_{{1}}-C_{{14}}C_{{5}}^{2}-C_{{5}}^{4} \right) x_{{
3}}^{4}.\end{array}$$
The polynomials of the reduced Gr\"obner basis of $(\mathcal G)$ modulo $\mathfrak h_0$ w.r.t.~to the deglex term order (in terms of the variables $C$) are: 
$$\begin{array}{ll}
p'_1=&{x_{{0}}}^{2}+C_{{1}}x_{{0}}x_{{3}}+ \left( C_{{1}}C_{{6}}-C_{{6}}^{
2} \right) {x_{{3}}}^{2},\\
p'_2=& x_{{0}}x_{{1}}+C_{{5}}x_{{0}}x_{{3}}+C_{{6}}x_{{1}}x_{{3}}+C_{{6}}C_{{5}}x_{{3}}^{
2},\\
p'_3=&x_{{0}}{x_{{2}}}^{3}+{x_{{1}}}
^{4}+C_{{10}}x_{{1}}^{2}x_{{2}}x_{{3}}+C_{{11}}x_{{0}}x_{{2}}^{2
}x_{{3}}+C_{{12}}x_{{1}}x_{{2}}^{2}x_{{3}}+C_{{15}}x_{{0}}x_{{
2}}x_{{3}}^{2}+\\
&C_{{16}}x_{{1}}x_{{2}}x_{{3}}^{2}+C_{{9}}x_{{1}}^{3}x_{3}+C_{14}x_{1}^{2}x_{3}^{2}+C_{18}x_{0}x_{{3}}^{3}+C_{{19}}x_{{1}}x_{{3}}^{3}+ \\
&\left( C_{{15}}C_{{1}}+C_{{16}}C_{{5}}-C_{{10}}C_{{5}}^{2}-C_{{6}}C_{{15}}
 \right) x_{{2}}x_{{3}}^{3}+\\
&+ \left( -C_{{6}}+C_{{1}} \right) 
x_{{2}}^{3}x_{{3}}+ \left( C_{{12}}C_{{5}}+C_{{11}}C_{{1}}-C_{{6}}C_{{11}}
 \right) x_{{2}}^{2}x_{{3}}^{2}+\\
&+ \left( C_{{9}}C_{{5}}^{3}+C_{{19}}C_{{5}}-C_{{6}}C_{{
18}}+C_{{18}}C_{{1}}-C_{{14}}C_{{5}}^{2}-C_{{5}}^{4} \right) x_{{
3}}^{4}\\
p'_4=&x_{{1}}^{5}+ \left( C_{{5}}C_{{11}}C_{{1}}+C_{12}{C_{{5}}}^{2}-2\,C_{{11}}C_{{6}}C_{{5}} \right)x_{{2}}^{2}x_{{3}}^{3}+\\
&\left( C_{{5}}C_{{15}}C_{{1}}-2\,C_{{15}}C_{{6}}C_{{5}}+C_{{16}
}C_{{5}}^{2}-C_{{10}}C_{{5}}^{3} \right) x_{{2}}x_{{3}}^{4}+ \\
&+\left( C_{{18}}C_{{1}}+C_{{9}}C_{{5}}^{3}-C_{{5}}^{4}+2\,C_{{19}}
C_{{5}}-2\,C_{{6}}C_{{18}}-C_{{14}}C_{{5}}^{2} \right) x_{{1}}x_{{3
}}^{4}+\\
& \left( C_{{5}}+C_{{9}} \right) x_{{1}}^{4}x_{{3}}+ \left( C_{{5}}C_{{9}}+C_{{14}} \right) x_{{1}}^{3}x_{{3}}^{2}+ \left( C_{5}C_{{14}}+C_{{19}} \right)x_{{1}}^{2}x_{{3}}^{3}+ \\
&\left( C_{5}C_{1}-2\,C_{{6}}C_{{5}} \right) x_{{2}}^{3}x_{{3}}^{2}+\left(2\,C_{{12}}C_{{5}}+C_{{11}}C_{{1}}-2\,C_{{6}}C_{{11}} \right) x_{{2}}^{2}x_{{1}}x_{{3}}^{2}+\\
& \left( C_{5}C_{10}+C_{16} \right) x_{{2}}x_{{1}}^{2}x_{{3}}^{2}+\left( 2\,C_{{16}}C_{{5}}-2\,C_{{6}}C_{{15}}+C_{{15}}C_{{1}}-C_{{10}}C_{{5}}^{2} \right) x_{{2}}x_{{1}}x_{{3}}^{3}+\\
&C_{{12}}x_{{1}}^{2}x_{{2}}^{2}x_{{3}}+C_{{10}}x_{{1}}^{3}x_{{2}}x_{{3}}+\left( -2\,C_{{6}}+C_{{1}} \right) x_{{2}}^{3}x_{{1}}x_{{3}}+\\
& \left( C_{{9}}C_{{5}}^{4}-C_{{14}}C_{{5}}^{3}-2\,C_{{5}}C_{{6}}C_{{18}}+C_{{19}}C_{{5}}^{2}+C_{{5}}C_{{18}}C_{{1}}-C_{{5}}^{5} \right)x_{{3}}^{5}.\end{array}$$
Starting from the Gr\"obner basis $G'$, consider $\mathcal G'=\{f'_1+g'_1,f'_2+g'_2,f'_3+g'_3,f'_4+g'_4\}$, where $g'_i\in K[D][\mathbf x,x_n]$ as in formula \eqref{code} with the varables $D$ in place of $C$. We highlight that $\vert D\vert =39\not=\vert C\vert$.

Now, we are going to construct $\mathrm L_H$ into an affine space of dimension 39. The set 
$\mathcal G'$ is a Gr\"obner basis w.r.t.~the deglex term order, modulo the ideal 
$\mathfrak h'_0= (D_{{9}}+D_{{5}}-D_{{22}},D_{{10}}-D_{{23}},-D_{{24}}+D_{{12}},-D_{{6}
}+D_{{13}}-D_{{25}},D_{{25}}-2\,D_{{13}}+D_{{1}},
D_{{7}},-D_{{26}},-D_
{{2}},-D_{{3}},-D_{{29}},D_{{5}}D_{{22}}-D_{{5}}^{2}-D_{{27}}+D_{{14
}},D_{{5}}D_{{23}}-D_{{28}}+D_{{16}},
-D_{{5}}D_{{24}}-D_{{13}}D_{{11}}
+D_{{17}},2\,D_{{5}}D_{{24}}+D_{{11}}D_{{25}}-D_{{30}},D_{{25}}D_{{5}}
-D_{{5}}D_{{13}}+D_{{8}},
D_{{25}}D_{{5}}-D_{{31}},-D_{{13}}D_{{25}}+D_{{13}}^{2}-D_{{4}},-D_{{33}},D_{{24}}D_{{5}}^{2}+D_{{25}}D_{{5}}D_{{11}}-D_{{35}},
-D_{{5}}^{2}D_{{22}}+D_{{5}}^{3}+D_{{5}}D_{{27}}-D_{{32}}+D_{{19}},
2\,D_{{5}}^{2}D_{{23}}-D_{{13}}D_{{15}}-D_{{5}}D_{{28}}+D_{{20}},
-3\,D_{{5}}^{2}D_{{23}}+D_{{15}}D_{{25}}+2\,D_{{5}}D_{
{28}}-D_{{34}},-D_{{36}},-2\,D_{{23}}D_{{5}}^{3}+$ $D_{{25}}D_{{5}}D_{{15}}+D_{{28}}D_{{5}}^{2}-D_{{38}},
-3\,D_{{5}}^{3}D_{{22}}+4\,D_{{5}}^{4}+2\,D_{{5}}^{2}D_{{27}}-D_{{13}}D_{{18}}-D_{{5}}D_{{32}}+D_{{21}},
4\,D_{{5}}^{3}D_{{22}}-5\,D_{{5}}^{4}-3\,D_{{5}}^{2}D_{{27}}+D_{{18}}D_{{25}}+2\,D_{{5}}D_{{32}}-D_{{37}},
3\,D_{{22}}D_{{5}}^{4}-4\,D_{{5}}^{5}-2\,D_{{27}}D_{{5}}^{3}+D_{{25}}D_{{5}}D_{{18}}+D_{{32}}D_{{5}}^{2}-D_{{39}})\subset K[D]$. 

After the  elimination  of the variables, there are again 12 free variables $D$ left.

By comparing the coefficients of the polynomials $f'_1+g'_1,f'_2+g'_2,f'_3+g'_3,f'_4+g'_4$ with those
of $p'_1,p'_2,p'_3,p'_4$, we can construct a $K$-algebra morphism $\phi$ between $K[D]/\mathfrak h'_0$ and $K[C]/\mathfrak h_0$. 
Since we are considering $K[D]/\mathfrak h'_0$, it is enough to give the images under $\phi$ of the 12 free variables $D$: 
\[\begin{array}{lll}
\phi(D_{{5}})=C_{{5}}, &\phi(D_{{11}})=C_{{11}},&\phi(D_{{13}})=C_{{6}}+C_{{1}},\\
\phi(D_{{15}})=C_{{15}},&\phi(D_{{18}})=C_{{18}},&\phi(D_{{22}})=C_{{5}}+C_{{9}},\\
\phi(D_{{23}})=C_{{10}}, &\phi(D_{{24}})=C_{{12}}, &\phi(D_{{25}})=-2\,C_{{6}}+C_{{1}},\\ \phi(D_{{27}})=C_{{5}}C_{{9}}+C_{{14}},&\phi(D_{{28}})=C_{{5}}C_{{10}}+C_{{16}},&\phi(D_{{32}})=C_{{5}}C_{{14}}+C_{{19}}
\end{array}\]
The morphism $\phi$ is exactly the one of Theorem \ref{isomorfismo}, hence it is an isomorphism between $K[D]/\mathfrak h'_0$ and $K[C]/\mathfrak h_0$.
\end{example}



\begin{example}
In this example, we compute radical liftings of a given homogeneous saturated ideal defining an aCM scheme of codimension two by the results of Section \ref{sec:acm}.

We consider the aCM   ideal  $H= (x_0^2-x_1^2, x_0x_1+2x_1^2, x_1^3)\subset K[x_0,x_1,x_2]$. The scheme in $\mathbb P^2$ defined by $H$ is non reduced and its support is a point, since the saturated ideal is not radical. The initial ideal of $H$ w.r.t.~degrevlex is  $\mathfrak j=(x_0^2,x_0x_1,x_1^3)$. In order to obtain a radical $x_3$-lifting of $H$ in $K[x_0,x_1,x_2,x_3]$ following the proof of Theorem \ref{thm:liftACMdue}, we first construct a radical $x_3$-lifting $N$ of $\mathfrak j$ obtained by a so-called distraction \cite{H66}: $N=(x_0(x_0+x_3), x_0x_1, x_1(x_1+x_3)(x_1-x_3))$. Observe that this ideal is not radical when $\mathrm{char}(K)=2$; in this case we will consider a different ideal $N$, at the end of this example.

We now write down the matrices associated to $\mathfrak j$, $H$ and $N$ and we consider the ideal $I$ which corresponds to the matrix $M_I=M_H +M_{N}-M_{\mathfrak j}$:

$$M_{\mathfrak j}=\left( \begin{array}{ccc}  x_1 & -x_0  & 0  \\ 0 & x_1^2 & -x_0 \end{array} \right)  
\quad \quad   M_H=\left( \begin{array}{ccc}  x_1 & -x_0+2x_1 & -3 \\ 0 & x_1^2 & -x_0-2x_1 \end{array} \right)  $$ 

$$ M_{N}=\left( \begin{array}{ccc}  x_1 & -x_0-x_3 & 0 \\ 0 & x_1^2-x_3^2 & -x_0 \end{array} \right) 
\quad M_{I}=\left( \begin{array}{ccc}  x_1 & -x_0+2x_1-x_3 & -3 \\ 0 & x_1^2-x_3^2 & -x_0-2x_1 \end{array} \right)$$
Note that the matrix $M_H$ determines the isomorphism $\psi_2$ of a free resolution of type \eqref{eq:acm resolution} which is not minimal, in this case.

The generators of the ideal $I$ are the minors of maximal order of the matrix $M_I$:
$$I=(x_0^2-x_1^2+x_0x_3+2x_1x_3-3x_3^2 , x_0x_1+2x_1^2, x_1^3-x_1x_3^2).$$
We can easily verify that $I$ is a radical lifting of $H$ assuming that $\mathrm{char}(K)\neq 13$: indeed, 
$$I= \left( x_0+2x_3, x_1-x_3)\cap (x_0+x_3, 2x_1-x_3)\cap (x_0^2+x_0x_3 -3x_3^2,x_1\right) $$
and the discriminant of $x_0^2+x_0x_3 -3x_3^2$ is $13$. 

On the other hand, if $\mathrm{char}(K)=13$, we consider the weight vector $\omega=[3,2,0,0]$ which makes every term in $J$ of degree 2 or 3 bigger than every term of the same degree in  $\cN(J)$. As in the proof of Theorem \ref{thm:liftACMdue}, we construct the ideal
$$I(t)=(x_0^2-x_1^2+t^{-3}x_0x_3+2t^{-3}x_1x_3-3t^{-4}x_3^2 , x_0x_1+2x_1^2, x_1^3-t^{-4}x_1x_3^2).$$
We replace $t$ by a  random integer, for instance $t=7$, and we obtain for $\mathrm{char}(K)=13$ the decomposition  $$I(7)=(x_0+2x_1, x_1-2x_3 )\cap (x_0+2x_1 , x_1+2x_3 )\cap ( x_0+x_3 , x_1 )\cap (x_0+4x_3 ,x_1).$$

When the field $K$ has characteristic $2$, we compute a $x_3$-lifting of $J$ assuming  $\vert K\vert\geq 3$ (for example, $K$ could be the algebraically closure of $\mathbb Z_2$) and letting $\chi$ be any element of $K$ different from $0,1$. Thus, we obtain the following radical $x_3$-lifting of $J$
$$N=(x_0(x_0+x_3),x_0x_1,x_1(x_1 + x_3)(x_1 + \chi x_3)) =\\  (x_0^2 + x_0x_3, x_0x_1, x_1^3 + (\chi+1) x_1^2x_3 + \chi x_1x_3^2).$$
In this case, the ideal $H$ becomes $(x_0^2+x_1^2,x_0x_1,x_1^3)$ and the matrices of syzygies are
$$M_H=\left( \begin{array}{ccc}  x_1 & x_0 & 1 \\ 0 & x_1^2 & x_0 \end{array} \right)  
\quad\quad
M_{N}=\left( \begin{array}{ccc}  x_1 & x_0+x_3 & 0 \\ 0 & x_1^2+(\chi+1)x_1x_3+\chi x_3^2 & x_0 \end{array} \right).$$
Anyway, in this case we take the matrix 
$M=\left( \begin{array}{ccc}  x_1 & x_0+x_3 & 1 \\ 0 & x_1^2+(\chi+1)x_1x_3+\chi x_3^2 & x_0 \end{array} \right)$
whose maximal minors define the following radical $x_3$-lifting of $H$:
$$\begin{array}{lll}
I&=&(x_1(x_1^2+(\chi+1)x_1x_3+\chi x_3^2),x_1x_0,x_0^2+x_0x_3+x_1^2+(\chi+1)x_1x_3+\chi x_3^2)=\\
&=&(x_1,\chi x_3^2+x_0^2+x_0x_3)\cap (x_0,\chi x_3+x_1)\cap (x_0,x_1+x_3).\end{array}$$
\end{example}


\begin{example} 
Consider the saturated monomial ideal $J=(x_0^2,x_0x_1,x_1^2) $ in $K[x_0,x_1,x_2]$, with $K$ of characteristic $0$. In this example, we study the locus of radical $x_3$-liftings of the ideal $J$. In order to construct the scheme of liftings of $J$ by Theorem \ref{FM} and Proposition \ref{comeLR}, we start from the polynomials
\[
\begin{array}{rcl}
f_1+g_1&=&x_0^{2}+C_{{1}}x_0x_3+C_{{2}}x_1x_3+C_{{3}}x_2x_3+C_{{4}}x_3^2, \\
f_2+g_2&=&x_0x_1+C_{{5}}x_0x_3+C_{{6}}x_1x_3+C_{{7}}x_2x_3+C_{{8}}x_3^2,\\
f_3+g_3&=&x_1^{2}+C_{{9}}x_0x_3+C_{{10}}x_1x_3+C_{{11}}x_2x_3+C_{{12}}x_3^2.
\end{array}
\]
Hence, we construct the scheme $\mathrm{L}_J$ as a subscheme of $\mathbb A^{12}$.
After  the elimination, we obtain that $\mathrm{L}_J$ is isomorphic to $\mathbb A^6$ and get the Gr\"obner basis 
\[
\begin{array}{rcl}
f_1+g_1&=&x_0^{2}+C_{1}x_0x_3+C_{2}x_1x_3+(-C_{5}C_{2}+C_{1}C_{6}+C_{2}C_{10}
-{C_{6}}^{2})x_3^2\\
f_2+g_2&=&x_0x_1-C_{{5}}x_0x_3+C_{{6}}x_1x_3+(C_{{6}}C_{{5}}-C_{{2}}C_{{9}})x_3^2\\
f_3+g_3&=&x_1^{2}+C_{{9}}x_0x_3+C_{{10}}x_1x_3+(C_{{10}}C_{{5}}+C_{{9}}C_{{1}}-C_{{6}}C_{{9
}}-{C_{{5}}}^{2})x_3^2.
\end{array}
\]
By definition of $x_3$-lifting, the hyperplane $x_3=0$ does not contain any irreducible component, nor isolated or embedded ones, of the scheme defined by a  $x_3$-lifting of $J$, hence we can put $x_3=1$ and work in the affine three-dimensional space.

If $C_2\neq0$, we obtain 
$x_1=-\frac{1}{C_2}x_0^2-\frac{C_{1}}{C_2}x_0+(\frac{C_6^2}{C_2}+C_5-\frac{C_{1}C_{6}}{C_2}-C_{10})$ \ from the polynomial $(f_1+g_1)(x_0,x_1,x_2,1)$ 
and replacing $x_1$ in $(f_2+g_2)(x_0,x_1,x_2,1)$ we get a degree $3$ polynomial
whose discriminant with respect to the variable $x_0$ is:
\[
\begin{array}{rcl}
\Delta&=&
16\,C_{{10}}C_{{1}}C_{{5}}C_{{6}}+18\,C_{{2}}C_{{9}}C_{{1}}C_{{10}}-36
\,C_{{2}}C_{{9}}C_{{1}}C_{{5}}-4\,{C_{{1}}}^{2}C_{{10}}C_{{5}}-48\,C_{
{9}}C_{{1}}{C_{{6}}}^{2}+\\
& &-16\,C_{{5}}C_{{10}}{C_{{6}}}^{2}-36\,C_{{2}}C
_{{10}}C_{{9}}C_{{6}}-4\,{C_{{10}}}^{2}C_{{1}}C_{{6}}+72\,C_{{5}}C_{{2
}}C_{{9}}C_{{6}}-16\,{C_{{5}}}^{2}C_{{1}}C_{{6}}+\\
& &+24\,C_{{9}}{C_{{1}}}^
{2}C_{{6}}+{C_{{1}}}^{2}{C_{{10}}}^{2}+4\,{C_{{1}}}^{2}{C_{{5}}}^{2}-4
\,{C_{{1}}}^{3}C_{{9}}+24\,C_{{5}}{C_{{10}}}^{2}C_{{2}}+\\
& &-48\,{C_{{5}}}^{2}C_{{10}}C_{{2}} -27\,{C_{{2}}}^{2}{C_{{9}}}^{2}+32\,{C_{{5}}}^{3}C_{
{2}}-4\,C_{{2}}{C_{{10}}}^{3}+32\,C_{{9}}{C_{{6}}}^{3}+16\,{C_{{5}}}^{
2}{C_{{6}}}^{2}+\\
& & +4\,{C_{{10}}}^{2}{C_{{6}}}^{2}.
\end{array}
\]
If $C_9\neq 0$, the analogous argument applied on $(f_3+g_3)(x_0,x_1,x_2,1)$ used to eliminate $x_0$ leads to the same discriminant $\Delta$. Finally, assuming that  $C_2=C_9=0$ gives a polynomial $(f_2+g_2)(x_0,x_1,x_2,1)$ that decomposes as $(x_0+C_6)(x_1-C_5)$. We study   separately  the two components corresponding to these two factors.  Replacing $x_0$ by $-C_6$ we find that $f_2+g_2$, and  $f_3+g_3$  vanish, while $f_1+g_1$ becomes a degree 2  polynomial in $x_0$ with discriminant $(C_1-2C_6)^2$. Analogously, replacing  $x_1$ by $C_5$  we get $(C_{10}+2C_5)^2$ as the discriminant of   $f_3+g_3$.  Then,   the locus of non-radical $x_3$-liftings  is  defined  by $\Delta$ also when  $C_2=C_9=0$, since   $\Delta =(C_1-2C_6)^2(C_{10}+2C_5)^2$ in $K[C]/(C_2,C_9)$.

Summing up, the locus of radical liftings of $J$ is the open subset which is the complement of the closed one defined by $(\Delta)$.
\end{example}

\bcR
The following last example shows that a scheme of $x_n$-liftings can be non-reduced.
 
\begin{example}\label{es:struttNonRid}
Let $H \subset K[x_0,\ldots,x_3]=K[\mathbf x]$ be the ideal generated by the reduced Gr\"obner basis $\{x_0x_3^3,x_0^3,x_0^2x_1,x_0x_1^2,x_1^3,$ $x_0^2x_2,x_0x_2^2,x_2^3-x_3^3\}$, w.r.t.~the degrevlex order on $K[\mathbf x]$. We construct the scheme of the $x_4$-liftings of $H$ starting from the set $\mathcal G=\{f_i+g_i\}_{i\in \{1,\ldots,8\}}$, where $f_i$ is  the $i$-th polynomial in the list of generators of $H$, and $g_i=\sum_j C_{i,j}\tau_j$ as in \eqref{code}. 

We obtain that the ideal $\mathfrak h_0$ (as in Definition \ref{defh0}) that defines the scheme of $x_4$-liftings is contained in $K[C]$, with $\vert C\vert =\vert\{C_{i,j}\}\vert=133$. After the elimination of $100$  variables, we obtain the ideal $\overline{\mathfrak h_0}$ defining $\mathrm L_H$ as a subscheme of its tangent space at $H$ in a polynomial ring in the remaining $33$ variables $\overline C$. In the reduced  Gr\"obner basis  of $\overline{\mathfrak h_0}$ we find some non-reduced polynomials, as for instance $C_{1, 3}^2(3C_{4, 2}-2C_{5, 3})$. Then, we can conclude that  $\overline{\mathfrak h_0}$ is a non-radical ideal, that is $\mathrm L_H$ is a non-reduced scheme. 
\end{example}
\ecr

\section*{Acknowledgements}
The second and fourth authors were partially supported by the PRIN 2010-11  \emph{Geometria delle variet\`a algebriche}, cofinanced by MIUR (Italy). We thank  Davide Franco for kindly providing useful background about the locus of points of a Hilbert scheme corresponding to reduced schemes.

\bibliographystyle{amsplain}

\end{document}